\newcommand{\U}[1]{\mathrm{U}(#1)}
\newcommand{\liesp}[1]{\mathop{\mathfrak{sp}}(#1)}
\newcommand{\Spin}[1]{\ensuremath{\text{\upshape\rmfamily Spin}(#1)}}
\newcommand{\Sp}[1]{\mathrm{Sp}(#1)}
\newcommand{\SO}[1]{\mathrm{SO}(#1)}
\newcommand{\SU}[1]{\mathrm{SU}(#1)}
\newcommand{\RH}{R^\HH}
\newcommand{\LH}{L^\HH}
\newcommand{\RO}{R}
\newcommand{\liespin}[1]{\mathop{\mathfrak{spin}}(#1)}
\newcommand{\lieso}[1]{\mathop{\mathfrak{so}}(#1)}
\newcommand{\Id}{\mathop{\mathrm{Id}}}
\newcommand{\End}[1]{\mathrm{End}(#1)}
\newcommand{\Cl}[1]{\mathrm{Cl}_{#1}}
\newcommand{\FII}{\mathrm{FII}}
\newcommand{\EIII}{\mathrm{EIII}}
\newcommand{\EVI}{\mathrm{EVI}}
\newcommand{\EVIII}{\mathrm{EVIII}}
\newcommand{\CC}{\mathbb{C}}   
\newcommand{\HH}{\mathbb{H}}   
\newcommand{\RR}{\mathbb{R}}
\newcommand{\OO}{\mathbb{O}}
\numberwithin{equation}{section}
\newtheorem{theorem}{Theorem}[section]
\newtheorem*{te*}{Theorem}
\newtheorem{proposition}[theorem]{Proposition}
\theoremstyle{definition}
\newtheorem{definition}[theorem]{Definition}    
\theoremstyle{remark}
\newtheorem{remark}[theorem]{Remark}
\newcommand{\tr}[1]{\mathop{\mathrm{tr}}(#1)}
\newcommand{\shortform}[1]{\ensuremath{{\scriptstyle\boldsymbol{#1}}}}
\numberwithin{equation}{section}
\begin{document}

\title{Clifford systems in octonionic geometry}  

\dedicatory{Dedicated to the memory of Sergio Console}

\subjclass[2010]{Primary 53C26, 53C27, 53C38}
\keywords{Clifford systems, octonions}
\renewcommand{\thefootnote}{\fnsymbol{footnote}}
\footnotetext[1]{All the authors were supported by the GNSAGA group of INdAM (the third author for a visit to Pisa and Rome in July 2015). The first and second authors were also supported by the MIUR under the PRIN Project ``Variet\`a reali e complesse: geometria, topologia e analisi armonica''. The third author was partially supported by CNCS UEFISCDI, project number PN-II-ID-PCE-2011-3-0118.}

\author{Maurizio Parton}
\author{Paolo Piccinni}
\author{Victor Vuletescu$^*$}

\begin{abstract}
We give an inductive construction for irreducible Clifford systems on Euclidean vector spaces. We then discuss how this notion can be adapted to Riemannian manifolds, and outline some developments in octonionic geometry.
\end{abstract}

\maketitle
\tableofcontents


\section{Introduction}
The notion of Clifford system, as formalized in 1981 by D.~Ferus, H.~Karcher and H.~F.~M\"unzner, has been used in the last decades both in the study of isometric hypersurfaces and of Riemannian foliations \cite{fkm,r,gr}. In particular, Clifford systems have been used by Sergio Console and Carlos Olmos  \cite{co} to give an alternative proof of a Theorem of E. Cartan stating that a compact isoparametric hypersurface of a sphere with three distinct principal curvatures is a tube around the Veronese embedding of the projective planes $\RR P^2, \CC P^2, \HH P^2, \OO P^2$ over the reals, complex numbers, quaternions and Cayley numbers, respectively.

In this statement, the Veronese embedding of the four projective planes goes into spheres $S^4$, $S^7$, $S^{13}$, $S^{25}$ and these embeddings admit an analogy in complex projective geometry. Namely, the (so-called) projective planes 
\[
\CC P^2, \; (\CC\otimes\CC)P^2,\; (\CC\otimes\HH)P^2, \;  (\CC\otimes\OO)P^2
\]
over complex numbers and over the other three composition algebras of complex complex numbers, complex quaternions and complex octonions, admit an embedding into complex projective spaces $\CC P^5$, $\CC P^8$, $\CC P^{14}$, $\CC P^{26}$. These latter embeddings are also named after Veronese and give rise to projective algebraic varieties of degrees $4$, $6$, $14$, $78$, respectively. Very interesting properties of the mentioned two series of Veronese embeddings have been pointed out in \cite{ab}.

The following Table A collects ``projective planes'' $(\mathbb K \otimes \mathbb K')P^2$ over composition algebras $\mathbb K \otimes \mathbb K'$, where $\mathbb K,\mathbb K'\in\{\RR,\CC,\HH,\OO\}$. Here notations $V_2^4$, $V_4^6$, $V_8^{14}$, $V_{16}^{78}$ (with lower and upper indices being the complex dimension and the degree, respectively) are for the so-called \emph{Severi varieties}, smooth projective algebraic varieties with nice characterizations realizing the mentioned embeddings \cite{zak}. Table A will give a general reference for our discussion. In particular, the fourth Severi variety $\mathrm{E_6}/ \mathrm{Spin}(10) \cdot \mathrm{U}(1)  \cong V_{16}^{78} \subset \CC P^{26}$ has been recently studied both with respect to the structure given by its holonomy and in the representation of the differential forms that generate its cohomology \cite{pp3}. 

\renewcommand{\arraystretch}{1.65}
\begin{table}[h]
\caption{Projective planes}
\begin{center}
\resizebox*{1.00\textwidth}{!}{%
\begin{tabular}{|c||c|c|c|c|}
\hline
$_{\mathbb K \; =} \backslash ^{\mathbb K' \; =}$  & $\mathbb R$ &$\mathbb C$&$\mathbb H$&$\mathbb O$\\
\hline\hline
{$\RR$} &{$\RR P^2$}& $\CC P^2 \cong V_2^4$ & {
$\HH P^2 $}& {$\OO P^2 \cong \mathrm{F_4}/\Spin{9}$}\\
\hline
{$\CC$}&$\CC P^2 \cong V_2^4$ &$\CC P^2 \times \CC P^2 \cong V_4^6$ &$Gr_2(\CC^6) \cong V_8^{14}$ & $\mathrm{E_6}/ \mathrm{Spin}(10) \cdot \mathrm{U}(1)  \cong V_{16}^{78}$\\
\hline
{$\HH$} &{$\HH P^2$} & $Gr_2(\CC^6) \cong V_8^{14}$& {$Gr_4^{or}(\RR^{12})$} & {$\mathrm{E_7}/\mathrm{Spin}(12) \cdot \Sp{1} $}\\
\hline
{$\OO$} &{$\OO P^2 \cong \mathrm{F_4}/ \Spin{9}$} & $\mathrm{E_6}/ \mathrm{Spin}(10) \cdot \mathrm{U}(1) \cong V_{16}^{78}$&{$\mathrm{E_7}/ \mathrm{Spin}(12) \cdot \Sp{1}$}&{$\mathrm{E_8}/ \mathrm{Spin}(16)^+$} \\
\hline
\end{tabular}
}
\end{center}
\end{table}

Recall that a \emph{Clifford system} on the Euclidean vector space $\RR^N$ is the datum of an $(m+1)$-ple 
\[
C_m=(P_0,\dots , P_m)
\]
of symmetric transformations $P_\alpha$ such that:
\[
P_\alpha^2 = \; \Id \; \; \text{for all} \; \; \alpha, \qquad P_\alpha P_\beta = -P_\beta P_\alpha \; \; \text{for all} \; \; \alpha \neq \beta.
\]
A Clifford system on $\RR^N$ is said to be \emph{irreducible} if $\RR^N$ is not direct sum of two positive dimensional subspaces that are invariant under all the $P_\alpha$.

From representation theory of Clifford algebras one recognizes (cf.~\cite[page 483]{fkm}, \cite[page 163]{hu}) that $\RR^N$ admits an irreducible Clifford system $C=(P_0,\dots , P_m)$ if and only if
\[
N= 2\delta (m),
\]
where $\delta(m)$ is given by the following
\begin{table}[h]
\caption{Clifford systems}
\renewcommand{\arraystretch}{1.65}
\begin{center}
\resizebox*{1.00\textwidth}{!}{%
\begin{tabular}{|c||c|c|c|c|c|c|c|c|c|c|c|c|c|c|c|c|c|c|c|c|c|}
\hline
$m$&$1$&$2$&$3$&$4$&$5$&$6$&$7$&$8$&$9$&$10$&$11$&$12$&$13$&$14$&$15$&$16$&\dots&$8+h$\\
\hline
$\delta(m)$&$1$&$2$&$4$&$4$&$8$&$8$&$8$&$8$&$16$&$32$&$64$&$64$&$128$&$128$&$128$&$128$&\dots&$16\delta(h)$\\
\hline
\end{tabular}
}
\end{center}
\end{table}

One can discuss uniqueness as follows. Given on $\RR^N$ two Clifford systems $C_m=(P_0,\dots , P_m)$ and $C'_m=(P'_0,\dots , P'_m)$, they are said to be \emph{equivalent} if there exists $A \in O(N)$ such that $P'_\alpha = A^t P_\alpha A$  for all $\alpha$. Then for $m \not\equiv 0$ mod $4$ there is a unique equivalence class of irreducible Clifford systems, and for $m \equiv 0$ mod $4$ there are two, classified by the two possible values of $\tr {P_0 P_1 \dots P_m} = \pm 2 \delta (m)$.

In the approach by Sergio Console and Carlos Olmos to the mentioned E.~Cartan theorem on isoparametric hypersurfaces with three distinct principal curvatures in spheres, the Clifford systems are related with the Weingarten operators of their focal manifolds, and the possible values of $m$ turn out to be here only $1,2,4$ or $8$, the multiplicities of the eigenvalues of the Weingarten operators. 

In the present paper we outline an inductive construction for all Clifford systems on real Euclidean vector spaces $\RR^N$, by pointing out how the four Clifford systems $C_1,C_2,C_4,C_8$ considered in \cite{co} correspond to structures given by the groups $\U{1}$, $\U{2}$, $\Sp{2}\cdot\Sp{1}$, $\Spin{9}$. We also develop, following ideas contained in \cite{pp,pp2,oppv,pp3}, the intermediate cases as well as some further cases appearing in Table B. We finally discuss the corresponding notion on Riemannian manifolds and relate it with the notion of even Clifford structure and with the octonionic geometry of some exceptional Riemannian symmetric spaces.

We just mentioned the even Clifford structures, a kind of unifying notion proposed by A.~Moroianu and U.~Semmelmann \cite{ms}. It is the datum, on a Riemannian manifold $M$, of a real oriented Euclidean vector bundle $(E,h)$, together with an algebra bundle morphism $\varphi:\text{Cl}^0(E) \rightarrow \End{TM}$ mapping $\Lambda^2 E$ into skew-symmetric endomorphisms. Indeed, a Clifford system gives rise to an even Clifford structure, but there are some even Clifford structures on manifolds that cannot be constructed, even locally, from Clifford systems. This will be illustrated by examples in Sections \ref{sectionEIII} and \ref{essential Clifford structures}.

\emph{Acknowledgements.} We thank F.\ Reese Harvey for his interest in the present work and for taking the reference \cite{dh} to our attention, cf.\ Remark \ref{dh}. We also thank the referee for pointing out how representation theory of Clifford algebras allows to give the basic Proposition \ref{basic} on essential even Clifford structures. Also proofs of Theorems \ref{EIII} and \ref{EVI} have been simplified through this approach.

\section{From $\RR$ to $\CC$ and to $\HH$: the Clifford systems $C_1,C_2,C_3, C_4$}\label{first four}

We examine here the first four columns of Table B, describing with some details irreducible Clifford systems $C_1,C_2.C_3,C_4$, acting on $\RR^2, \RR^4, \RR^8, \RR^8$, respectively.

A Clifford system $C_1$ on $\RR^2$ (here $m=1$ and $\delta (m) =1$) is given by matrices
\begin{equation*}
N_0=\left(
\begin{array}{rr}
0 & 1 \\
1 & 0
\end{array}\right),\qquad
N_1=\left(
\begin{array}{rr}
1 & 0 \\
0 & -1
\end{array}\right),
\end{equation*}
representing in $\CC \cong \RR^2$ the involutions $z \in \CC \rightarrow i \bar z$ and $z \in \CC \rightarrow \bar z$, whose composition 
\[
N_{01} = N_0 N_1 = \left(
\begin{array}{rr}
0 & -1 \\
1 & 0
\end{array}\right)
\]
is  the complex structure on $\CC \cong \RR^2$. 

Going to the next case, the Clifford system $C_2$ (now $m=2$ and $\delta (m)=2$) is the prototype example of the Pauli matrices: 
\begin{equation*}
P_0=\left(
\begin{array}{rr}
0 & 1 \\
1 & 0
\end{array}\right),\qquad
P_1=\left(
\begin{array}{rr}
0 & -i \\
i & 0
\end{array}\right),\qquad
P_2=\left(
\begin{array}{rr}
1 & 0 \\
0 & -1
\end{array}\right),
\end{equation*}
that we will need in their real representation:
\begin{equation*}
P_0=\left(
\begin{array}{r|r}
0 & \Id \\
\hline 
\Id & 0
\end{array}\right),\quad
P_1=\left(
\begin{array}{c|c}
0 & -N_{01} \\
\hline 
N_{01}& 0
\end{array}\right),\qquad
P_2=\left(
\begin{array}{r|r}
\Id & 0 \\
\hline
0 & -\Id
\end{array}\right).
\end{equation*}

The compositions $P_{\alpha\beta} = P_\alpha P_\beta$, for $\alpha<\beta$, yield as complex structures on $\RR^4$
the multiplication on the right $\RH_i, \RH_j, \RH_k$ by unit quaternions $i,j,k$: 
\[
\resizebox*{1.00\textwidth}{!}{%
$
P_{01}=\RH_i =
\left(
\begin{array}{rrrr}
0 & -1 & 0 & 0 \\
1 & 0 & 0 & 0 \\
0 & 0 & 0 & 1 \\
0 & 0 & -1 & 0
\end{array}
\right)
,\enskip
P_{02}	=\RH_j =
\left(
\begin{array}{rrrr}
0 & 0 & -1 & 0 \\
0 & 0 & 0 & -1 \\
1 & 0 & 0 & 0 \\
0 & 1 & 0 & 0
\end{array}
\right)
,\enskip
P_{12}=\RH_k =
\left(
\begin{array}{rrrr}
0 & 0 & 0 & -1 \\
0 & 0 & 1 & 0 \\
0 & -1 & 0 & 0 \\
1 & 0 & 0 & 0
\end{array}
\right)
.
$
}
\]
Multiplication $\LH_i$ on the left by $i$ coincides with 
\[
P_{012} = P_0 P_1 P_2 = \LH_i =
\left(
\begin{array}{rrrr}
0 & -1 & 0 & 0 \\
1 & 0 & 0 & 0 \\
0 & 0 & 0 & -1 \\
0 & 0 & 1 & 0
\end{array}
\right)
,
\] 
and to complete $\RH_i, \RH_j,\RH_j,\LH_i$ to a basis of the Lie algebra $\lieso{4} \cong \liesp{1} \oplus \liesp{1}$ one has to add the two further left multiplications 
\[
\LH_j =
\left(
\begin{array}{rrrr}
0 & 0 & -1 & 0 \\
0 & 0 & 0 & 1 \\
1 & 0 & 0 & 0 \\
0 & -1 & 0 & 0
\end{array}
\right)
,\qquad
\LH_k =
\left(
\begin{array}{rrrr}
0 & 0 & 0 & -1 \\
0 & 0 & -1 & 0 \\
0 & 1 & 0 & 0 \\
1 & 0 & 0 & 0
\end{array}
\right)
.
\] 
Thus:
\begin{proposition}\label{C_2} 
Orthogonal linear transfomations in $\RR^4$ preserving the individual $P_0,P_1,P_2$ are the ones in $\U{1}=\SO{2}_\Delta \subset \SO{4}$, those preserving the vector space $E^3=<P_0,P_1,P_2> $ are the ones in $\U{2}=\Sp{1}\cdot \U{1}$.
\end{proposition}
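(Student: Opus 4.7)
For the first assertion, since $A\in O(4)$ is orthogonal, the condition $A^tP_\alpha A=P_\alpha$ is equivalent to $A$ commuting with $P_\alpha$. Imposing this for all three $\alpha$ amounts to commuting with the subalgebra of $\End{\RR^4}$ generated by $P_0,P_1,P_2$, whose nontrivial generators are $P_{01}=\RH_i$, $P_{02}=\RH_j$, $P_{12}=\RH_k$ and $P_{012}=\LH_i$, as computed above. Commuting with the three right multiplications $\RH_i,\RH_j,\RH_k$ forces $A$ to be right $\HH$-linear on $\HH\cong\RR^4$, hence $A=\LH_q$ for some unit quaternion $q$; the further commutation with $\LH_i$ gives $qi=iq$, so $q\in\CC\cap S^3=S^1$. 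Under $\HH\cong\CC\oplus\CC j$ the map $\LH_{e^{i\theta}}$ acts as $(z,w)\mapsto(e^{i\theta}z,e^{i\theta}w)$, realising the diagonal circle $\SO{2}_\Delta$ inside $\SO{4}$.

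For the second assertion, I would adopt the complex structure $J=P_{012}=\LH_i$ to view $\RR^4$ as $\CC^2$. Because $J$ is central in the Clifford algebra generated by $P_0,P_1,P_2$, every $P\in E^3$ commutes with $J$, hence is $\CC$-linear; combined with real symmetry, each such $P$ is Hermitian on $\CC^2$. Since $E^3$ is $3$-dimensional and is contained in the $3$-dimensional space of traceless Hermitian $2\times 2$ matrices, the two spaces coincide and, under this identification, $P_0,P_1,P_2$ are exactly the three standard Pauli matrices. The conjugation action $P\mapsto UPU^{*}$ of $U\in\U{2}$ thus manifestly preserves $E^3$; concretely, $\U{2}$ sits in $\SO{4}$ as the centraliser of $J$ and decomposes as $\Sp{1}\cdot\U{1}=\Sp{1}_R\cdot\LH_{\U{1}}$, with $\Sp{1}_R\cong\SU{2}$ realised by right multiplications by unit quaternions and $\LH_{\U{1}}$ the centre of $\U{2}$, coinciding with the circle from the first assertion.

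For the converse, the conjugation action of the stabiliser $N$ of $E^3$ preserves the inner product $\langle P,Q\rangle=\tfrac14\tr{PQ}$ on $E^3$ (a polarisation of $P^2=|P|^2\Id$), giving a homomorphism $\rho\colon N\to O(3)$ with kernel the centraliser computed in the first assertion. Since $\U{2}\subset N$ is connected of dimension $1+3=4$ and projects onto $\SO{3}$, it exhausts the identity component of $N$, which yields the desired identification. The main technical point is verifying that the ``right'' $\Sp{1}$ inside $\SO{4}$---namely $\Sp{1}_R$, the $\SU{2}$-factor of $\U{2}$---actually normalises $E^3$, which can be checked directly on generators (e.g.\ $\RH_iP_0\RH_i^{-1}=-P_0$, $\RH_iP_1\RH_i^{-1}=-P_1$ and $\RH_iP_2\RH_i^{-1}=P_2$, so $\RH_i$ acts on $E^3$ as the rotation of angle $\pi$ about the $P_2$-axis).
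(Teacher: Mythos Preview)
Your argument is correct and supplies the details that the paper omits: in the text, Proposition~\ref{C_2} is simply stated after the explicit computations of the $P_\alpha$ and their products, with no separate proof. Your route---identifying $\RR^4\cong\CC^2$ via $J=P_{012}=\LH_i$, recognising $E^3$ as the space of traceless Hermitian $2\times2$ matrices (the Pauli matrices), and then reading off the stabiliser from the conjugation action $\U{2}\to\SO{3}$ with kernel the diagonal $\U{1}$---is the natural way to flesh out what the paper leaves implicit.

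One small remark on your last paragraph: strictly speaking you only establish that $\U{2}$ is the identity component of the stabiliser in $\SO{4}$. The full normaliser of $E^3$ in $\SO{4}$ has a second component: for instance $\LH_j$ sends each $P_\alpha$ to $-P_\alpha$ (you computed $\LH_j P_0 \LH_j^{-1}=-P_0$, and $\LH_j$ commutes with $\RH_i,\RH_j$, so the same follows for $P_1=P_0\RH_i$ and $P_2=P_0\RH_j$), hence acts on $E^3$ as $-\Id$, which is orientation-reversing since $\dim E^3=3$. The paper's statement should be read as referring to the orientation-preserving stabiliser, consistently with the later definition of a Clifford system on a manifold (transition functions required to lie in $\SO{m+1}$); with that convention your argument gives exactly $\U{2}=\Sp{1}\cdot\U{1}$.
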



The next Clifford systems $C_3$ and $C_4$ act on $\RR^8$. They can be defined by the following $4\times 4$ block matrices
\begin{equation*}
\begin{split}
C_3: \qquad Q'_0&=\left(
\begin{array}{r|r}
0 & \Id \\
\hline 
\Id & 0
\end{array}\right),\qquad
Q'_1=\left(
\begin{array}{c|c}
0 & -P_{01} \\
\hline 
P_{01}& 0
\end{array}\right),\\
Q'_2&=\left(
\begin{array}{c|c}
0 & -P_{02} \\
\hline 
P_{02}& 0
\end{array}\right),\qquad
Q'_3=\left(
\begin{array}{r|r}
\Id & 0 \\
\hline
0 & -\Id
\end{array}\right),
\end{split}
\end{equation*}
and
\begin{equation*}
C_4: \qquad  Q_0 =Q'_0,\enskip
Q_1 =Q'_1,\enskip
Q_2 =Q'_2,\enskip
Q_3=\left(
\begin{array}{c|c}
0 & -P_{12} \\
\hline 
P_{12}& 0
\end{array}\right),\enskip
Q_4= Q'_3.
\end{equation*}

The following characterizations of the structures on $\RR^8$ associated with $C_3$ and $C_4$ are easily seen.

\begin{proposition}\label{C_3} 
The structure defined in $\RR^8$  by the datum of the vector space $E^4 = <Q'_0,Q'_1,Q'_2,Q'_3>\subset\End{\RR^8}$ can be described as follows.
Matrices
\[
B=\left(
\begin{array}{c|c}
B' & B'' \\ 
\hline
B''' & B''''
\end{array} \right)
\]
commuting with the single endomorphisms $Q'_0,Q'_1,Q'_2,Q'_3 $ are characterized by the conditions $B'=B'''' \in \Sp{1} \subset \SO{4}$ and $B''=B'''=0$. Accordingly, matrices of $\SO{8}$ that preserve the vector space $E^4$ belong to a subgroup $\Sp{1} \cdot \Sp{1} \cdot \Sp{1} \subset \SO{8}$.
\end{proposition}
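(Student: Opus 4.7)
The plan is to split the proof into two parts, one for each assertion, using explicit block matrix computations combined with the standard description of the quaternionic commutant. For the first assertion, write $B$ in four $4\times 4$ blocks and impose $[B,Q'_\alpha]=0$ for each generator in turn. Commutation with the swap matrix $Q'_0$ immediately forces $B'=B''''$ and $B''=B'''$; commutation with the diagonal sign matrix $Q'_3$ kills the off-diagonal blocks, leaving $B''=B'''=0$ and $B'=B''''$. The remaining conditions coming from $Q'_1$ and $Q'_2$ reduce, via their block form and the identities $P_{01}=\RH_i$ and $P_{02}=\RH_j$, to requiring that the single block $B'$ commute with right multiplication by $i$ and $j$, hence with all right multiplications by quaternions on $\RR^4\cong\HH$. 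By the standard description $\SO{4}=(\Sp{1}\times\Sp{1})/\ZZ_2$, the commutant of right quaternion multiplication inside $\SO{4}$ is exactly $\{\LH_q : q\in\Sp{1}\}$, giving the first $\Sp{1}$.

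For the second assertion, let $G\subset\SO{8}$ denote the stabilizer of $E^4$ under conjugation. Restriction of this action to $E^4\cong\RR^4$ gives a homomorphism $\rho:G\to\mathrm{O}(4)$ whose kernel is the commutant computed in the first part, namely one copy of $\Sp{1}$. The image lies in $\SO{4}$: the product $Q'_0 Q'_1 Q'_2 Q'_3$ is a generator of the center of $\Cl{4}$ with nonzero trace $\pm 2\delta(4)$, hence is fixed under conjugation by $G$ and forces the induced action on $E^4$ to preserve orientation. For surjectivity, use Clifford multiplication: products of pairs of unit vectors in $E^4$, regarded as endomorphisms of $\RR^8$, lie in $\SO{8}$, stabilize $E^4$, and realize the double cover $\Spin{4}=\Sp{1}\times\Sp{1}\twoheadrightarrow\SO{4}$. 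The resulting short exact sequence $1\to\Sp{1}\to G\to\SO{4}\to 1$ then presents $G$ as the central product of three copies of $\Sp{1}$, the identifying central $\ZZ_2$ being generated by $-\Id$ in the commutant $\Sp{1}$, which coincides with the nontrivial kernel element of the Spin cover.

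The main obstacle is the bookkeeping of the central product, specifically verifying that the three $\Sp{1}$'s share exactly one common central $\ZZ_2$ (and not more) and that the Spin lift lands in $\SO{8}$ rather than in some larger Pin-type group. The cleanest way to handle this is to realize $\RR^8\cong\HH^2$ so that the commutant acts as simultaneous left multiplication by a unit quaternion on both factors, while the two Spin factors act as left and right quaternion multiplications on a single $\HH$ factor via the explicit $4\times 4$ block description of $C_3$; a final dimension count $\dim G=9=3\cdot\dim\Sp{1}$ rules out any further components.
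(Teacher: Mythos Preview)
The paper does not actually prove this proposition; it simply remarks that the characterization is ``easily seen'', so there is no argument in the text to compare against. Your treatment of the first assertion via block commutators is correct and is presumably what the authors have in mind.

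Your argument for the second assertion, however, contains a genuine error. You claim that the product $\omega=Q'_0Q'_1Q'_2Q'_3$ is a generator of the center of the Clifford algebra and has trace $\pm2\delta(4)$, and you use this to force the image of $\rho$ into $\SO{4}$. Both claims are false here. The trace formula $\tr{P_0\cdots P_m}=\pm2\delta(m)$ quoted in the paper applies only when $m\equiv 0\pmod 4$; the present Clifford system is $C_3$, so $m=3$, not $4$. A direct computation gives
\[
\omega=Q'_0Q'_1Q'_2Q'_3=\begin{pmatrix}0&-\RH_k\\ \RH_k&0\end{pmatrix},
\]
which is block off-diagonal and hence has trace $0$. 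Moreover, since there are four generators (an even number), $\omega$ anticommutes with each $Q'_\alpha$ and is therefore \emph{not} central; conjugation by an element of $G$ acting on $E^4$ with determinant $-1$ sends $\omega$ to $-\omega$, which is perfectly consistent.

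In fact the image of $\rho$ is all of $\mathrm{O}(4)$: the involution $Q'_0$ itself lies in $\SO{8}$ (it has four $+1$ and four $-1$ eigenvalues), normalizes $E^4$, and acts on $E^4$ as $\diagtext(1,-1,-1,-1)$, which has determinant $-1$. Consequently the full stabilizer of $E^4$ in $\SO{8}$ has two connected components, and your short exact sequence should read $1\to\Sp{1}\to G\to\mathrm{O}(4)\to1$. The connected group $\Sp{1}\cdot\Sp{1}\cdot\Sp{1}$ that both you and the paper name is the identity component $G_0$; your Spin-lift argument, together with the dimension count, does establish this correctly once the orientation claim is dropped.
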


To prepare the next Clifford systems, namely $C_5,C_6,C_7,C_8$ on $\RR^{16}$, we need to look at the complex structures $Q_{0 \alpha}= Q_0Q_\alpha$ on $\RR^8$. They indeed coincide with $R_i,R_j,R_k,R_e$, the right multiplication on $\RR^8 \cong \OO$ by octonions $i,j,k,e$, respectively:

\begin{equation*}
\begin{split}
Q_{01}=R_i&=\left(
\begin{array}{c|c}
\RH_i & 0 \\ \hline
0 & -\RH_i
\end{array}
\right),\qquad
Q_{02}=R_j=\left(
\begin{array}{c|c}
\RH_j & 0 \\ \hline
0 & -\RH_j
\end{array}
\right),\\
Q_{03}=R_k&=\left(
\begin{array}{c|c}
\RH_{k}& 0 \\ \hline
0& -\RH_{k}
\end{array}
\right),\qquad
Q_{04}=R_e=\left(
\begin{array}{c|c}
0& -\Id \\ \hline
\Id&0
\end{array}
\right).
\end{split}
\end{equation*}

Associated with the Clifford system $C_4=(Q_0,Q_1,Q_2,Q_3,Q_4)$, we have ten complex structures $Q_{\alpha\beta}=Q_\alpha Q_\beta$ with $\alpha < \beta$, a basis of the Lie algebra $\liesp{2} \subset \lieso{8}$. 

Their K\"ahler forms $\theta_{\alpha\beta}$, written in the coordinates of $\RR^8$, and using short notations like $\shortform{12} = dx_1 \wedge dx_2$, read:
\begin{equation*}
\begin{aligned}
\theta_{01} &= -\shortform{12}+\shortform{34}+\shortform{56}-\shortform{78}, &
\theta_{02} &= -\shortform{13}-\shortform{24}+\shortform{57}+\shortform{68}, &
\theta_{03} &= -\shortform{14}+\shortform{23}+\shortform{58}-\shortform{67}, \\
\theta_{12} &= -\shortform{14}+\shortform{23}-\shortform{58}+\shortform{67}, &
\theta_{13} &= \shortform{13}+\shortform{24}+\shortform{57}+\shortform{68}, &
\theta_{23} &= -\shortform{12}+\shortform{34}-\shortform{56}+\shortform{78}, \\
\theta_{04} &= -\shortform{15}-\shortform{26}-\shortform{37}-\shortform{48}, & 
\theta_{14} &= -\shortform{16}+\shortform{25}+\shortform{38}-\shortform{47}, & \\
\theta_{24} &= -\shortform{17}-\shortform{28}+\shortform{35}+\shortform{46}, & 
\theta_{34} &= -\shortform{18}+\shortform{27}-\shortform{36}+\shortform{45}.
\end{aligned}
\end{equation*}
Thus, the second coefficient of the characteristic polynomial of their skew-symmetric matrix $\theta =(\theta_{\alpha\beta})$ turns out to be:
\begin{equation*}
\tau_2(\theta)=\sum_{\alpha < \beta} \theta^2_{\alpha \beta} =
-12\shortform{1234}-4\shortform{1256}-4\shortform{1357}+4\shortform{1368}-4\shortform{1278}-4\shortform{1467}-4\shortform{1458}+\star = -2 \Omega_L,
\end{equation*}
where $\star$ denotes the Hodge star of what appears before it, and where
\[
\Omega_L = \omega^2_{\LH_i}+\omega^2_{\LH_j}+\omega^2_{\LH_k}
\]
is the left quaternionic 4-form. 

One can check that matrices 
$
B=\left(
\begin{array}{c|c}
B' & B'' \\ 
\hline 
B''' & B'''' 
\end{array}
\right)\in\SO{8}
$ commuting with each of the $Q_\alpha$ are again the ones satisfying $B''=B'''=0$ and $B'=B''''\in\Sp{1}\subset\SO{4}$. Hence the stabilizer of all individual $Q_\alpha$ is the diagonal $\Sp{1}_\Delta \subset\SO{8}$, and the stabilizer of their spanned vector space $E^5$ is the quaternionic group $\Sp{2}\cdot\Sp{1} \subset \SO{8}$. Thus:

\begin{proposition}\label{C_4}
The vector space $E^5$ spanned by the Clifford system $C_4=(Q_0,Q_1,Q_2,Q_3,Q_4)$ gives rise to the quaternion Hermitian structure of $\RR^8$, and it is therefore equivalent to the datum either of the reduction to $\Sp{2}\cdot\Sp{1} \subset \SO{8}$ of its structure group, or to the quaternionic 4-form $\Omega_L$. The complex structures $Q_{\alpha \beta} = Q_\alpha Q_\beta$ are for $\alpha < \beta$ a basis of the Lie subalgebra $\mathfrak{sp}(2) \subset \mathfrak{so}(8)$.
\end{proposition}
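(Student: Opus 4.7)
The plan is to assemble the three claims in the proposition from the explicit matrix computations recorded just above the statement together with a short representation-theoretic input.

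\emph{The ten $Q_{\alpha\beta}$ as a basis of $\liesp{2}$.} Since each $Q_\alpha$ is a symmetric involution and distinct pairs anticommute, each product $Q_{\alpha\beta}$ with $\alpha<\beta$ is a skew-symmetric involution squaring to $-\Id$, hence an orthogonal complex structure in $\lieso{8}$. The ten K\"ahler forms $\theta_{\alpha\beta}$ displayed above are visibly linearly independent, so the ten $Q_{\alpha\beta}$ span a subspace of the correct dimension $10=\dim\liesp{2}$ inside $\lieso{8}$. The Clifford relations $Q_\mu Q_\nu+Q_\nu Q_\mu=2\delta_{\mu\nu}\Id$ force every bracket $[Q_{\alpha\beta},Q_{\gamma\delta}]$ to reduce to $\pm 2Q_{\mu\nu}$, making this span a Lie subalgebra; identifying $Q_{01},Q_{02},Q_{03}$ with the right quaternionic triple $R_i,R_j,R_k$ on $\RR^8\cong\HH^2$ pins it down as $\liesp{2}$, the centralizer in $\lieso{8}$ of a right $\HH$-module structure.

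\emph{Equivalence of $E^5$, $\Omega_L$ and the reduction.} The identity $\tau_2(\theta)=-2\Omega_L$ proved immediately above shows that $E^5$ canonically determines $\Omega_L$. Conversely, $\Omega_L$ is the standard quaternionic $4$-form of $\HH^2\cong\RR^8$ and so has stabilizer $\Sp{2}\cdot\Sp{1}$ by definition; within $\End{\RR^8}$ the $5$-plane $E^5$ then reappears canonically as the traceless $\HH$-Hermitian summand of the centralizer of right multiplication by $\HH$, which is a $16$-dimensional algebra decomposing under $\Sp{2}\cdot\Sp{1}$ as $\RR\oplus \liesp{2}\oplus E^5$.

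\emph{Stabilizers.} A block-diagonal argument exactly parallel to Proposition~\ref{C_3} shows that $B\in\SO{8}$ commuting with every individual $Q_\alpha$ must satisfy $B'=B''''\in\Sp{1}\subset\SO{4}$ with vanishing off-diagonal blocks, giving the diagonal $\Sp{1}_\Delta$. For the stabilizer of $E^5$ as a subspace, conjugation sends $(Q_0,\dots,Q_4)$ to another orthonormal basis of $E^5$ with respect to $\langle A,B\rangle=\tfrac{1}{8}\tr{AB}$, inducing a homomorphism into $\mathrm{O}(5)$ with kernel $\Sp{1}_\Delta$; surjectivity onto $\SO{5}$ is provided by the $\Sp{2}$-action on $E^5$ from the previous step, factoring through the classical covering $\Sp{2}\to\SO{5}$, and the two pieces assemble into $\Sp{2}\cdot\Sp{1}$.

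The expected main obstacle is the last step, namely the identification of the image of the stabilizer in $\mathrm{O}(E^5)$ with the full $\SO{5}$ and the assembly of kernel $\Sp{1}_\Delta$ together with image $\Sp{2}/\ZZ_2$ into $\Sp{2}\cdot\Sp{1}$: the dimension count $3+10=13$ alone does not rule out a different $\ZZ_2$-quotient, so the cleanest resolution is the explicit $\Sp{2}$-action on $E^5$ produced in the preceding paragraph.
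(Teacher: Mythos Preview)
Your approach is essentially the paper's: the proposition is really a summary of the explicit computations carried out in the paragraphs preceding it (the $\tau_2(\theta)=-2\Omega_L$ identity, the block computation of the pointwise stabilizer, and the assertion that the subspace stabilizer is $\Sp{2}\cdot\Sp{1}$). You supply more detail than the paper does for the Lie-algebra closure and for the passage from kernel $\Sp{1}_\Delta$ and image $\SO{5}$ to the product $\Sp{2}\cdot\Sp{1}$, which is welcome since the paper simply asserts these.

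There is, however, a factual slip in your first paragraph. You write that $Q_{01},Q_{02},Q_{03}$ are ``the right quaternionic triple $R_i,R_j,R_k$ on $\RR^8\cong\HH^2$'' and use this to identify the $10$-dimensional subalgebra with $\liesp{2}$ as the centralizer of a right $\HH$-structure. But the paper computes $Q_{0\alpha}$ to be the \emph{octonionic} right multiplications on $\RR^8\cong\OO$, which in $4\times 4$ blocks read $\diagtext(\RH_i,-\RH_i)$, etc.; these are \emph{not} the scalar quaternionic right multiplications on $\HH^2$, and in particular they do not commute with one another, so they do not furnish an $\HH$-module structure whose centralizer you can invoke. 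The correct route---which you in fact have available---is the one through the pointwise stabilizer: all $Q_\alpha$ (hence all $Q_{\alpha\beta}$) commute with $\Sp{1}_\Delta$, and the paper identifies $\Sp{1}_\Delta$ with diagonal \emph{left} quaternionic multiplication $L^\HH$ on $\HH^2$; the centralizer of $L^\HH$ in $\lieso{8}$ is exactly $\liesp{2}$, of dimension $10$, so the span of the $Q_{\alpha\beta}$ coincides with it. This also repairs the left/right orientation in your second paragraph: $E^5$ sits inside the centralizer of \emph{left} $\HH$-multiplication (consistently with the appearance of $\Omega_L$), not of right multiplication.

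A smaller imprecision: $[Q_{\alpha\beta},Q_{\gamma\delta}]$ vanishes when the index pairs are disjoint, so ``reduces to $\pm 2Q_{\mu\nu}$'' should read ``is either $0$ or $\pm 2Q_{\mu\nu}$''. This does not affect closure under the bracket.
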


\begin{remark}\label{two}
As mentioned in the Introduction, when $m \equiv 0$ mod $4$, there are two equivalence classes of Clifford systems. It is clear from the construction of $C_4$ that a representative of the other class is just $\tilde C_4=(Q_0, \tilde Q_1, \tilde Q_2, \tilde Q_3, Q_4)$, where:
\begin{equation*}
\tilde Q_1=\left(
\begin{array}{c|c}
0 & -\LH_i \\ \hline
\LH_i & 0
\end{array}
\right),\qquad
\tilde Q_2=\left(
\begin{array}{c|c}
0 & -\LH_j \\ \hline
\LH_j & 0
\end{array}
\right),\qquad 
\tilde Q_3=\left(
\begin{array}{c|c}
0 & -\LH_k \\ \hline
\LH_k & 0
\end{array}
\right). 
\end{equation*}
\end{remark}

\section{Statement on how to write new Clifford systems and representation theory}\label{Statements}

The Clifford systems $C_3$ and $C_4$ have been obtained from $C_2$ through the following procedure. Similarly for the step $C_1 \rightarrow C_2$. 

\begin{theorem}\label{Procedure}{\rm[Procedure to write new Clifford systems from old]} Let $C_m= (P_0, P_1, \dots , P_m)$ be the last (or unique) Clifford system in $\RR^N$. Then the first (or unique) Clifford system \[
C_{m+1}=(Q_0, Q_1, \dots , Q_m, Q_{m+1})
\]  
in $\RR^{2N}$ has as first and as last endomorphisms respectively
\[
Q_0 =\left(
\begin{array}{r|r}
0 & \Id \\
\hline 
\Id & 0
\end{array}\right),\qquad Q_{m+1}=\left(
\begin{array}{r|r}
\Id & 0\\
\hline 
 0 & -\Id
\end{array}\right),
\]
where the blocks are $N \times N$. The remaining matrices are
\[
Q_\alpha =\left(
\begin{array}{c|c}
0 & -P_{0\alpha} \\
\hline 
P_{0 \alpha} & 0
\end{array}\right)\qquad \alpha=1,\dots,m. 
\]
Here $P_{0 \alpha}$ are the complex structures given by compositions $P_0 P_\alpha$ in the Clifford system $C_m$. When the complex structures $P_{0\alpha}$ can be viewed as (possibly block-wise) right multiplications by some of the unit quaternions $i,j,k$ or unit octonions $i,j,k,e,f,g,h$, and if the dimension permits, similarly defined further endomorphisms $Q_\beta$ can be added by using some others among $i,j,k$ or $i,j,k,e,f,g,h$.
\end{theorem}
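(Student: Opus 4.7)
The plan is to verify directly that the collection $(Q_0, Q_1, \ldots, Q_{m+1})$ satisfies the three defining properties of a Clifford system---symmetry of each $Q_\alpha$, the relation $Q_\alpha^2 = \Id$, and pairwise anticommutation---and then to explain why the result is irreducible on $\RR^{2N}$. For $Q_0$ and $Q_{m+1}$, symmetry and squaring to $\Id$ are immediate from the block form. For $Q_\alpha$ with $1\leq\alpha\leq m$, symmetry amounts to $(P_0P_\alpha)^t=-P_0P_\alpha$, which follows from $P_0,P_\alpha$ being symmetric and anticommuting; and $Q_\alpha^2=\Id$ reduces to $(P_0P_\alpha)^2=-\Id$, which also follows from the Clifford relations in $C_m$, confirming that $P_{0\alpha}$ is a genuine complex structure.

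For anticommutation, I would carry out a short block computation in three cases. The products $Q_0 Q_{m+1}$ and $Q_{m+1}Q_0$ are block off-diagonal with entries $\pm\Id$ of opposite sign. The products $Q_0 Q_\alpha$ and $Q_\alpha Q_0$, as well as $Q_{m+1}Q_\alpha$ and $Q_\alpha Q_{m+1}$, are block-diagonal with entries $\pm P_{0\alpha}$, again of opposite sign. The remaining case $1\leq\alpha<\beta\leq m$ reduces to the identity $P_{0\alpha}P_{0\beta}=-P_{0\beta}P_{0\alpha}$: indeed $P_{0\alpha}P_{0\beta}=P_0P_\alpha P_0P_\beta=-P_\alpha P_\beta$, whereas $P_{0\beta}P_{0\alpha}=-P_\beta P_\alpha=P_\alpha P_\beta$ by the anticommutativity of the $P_\gamma$ in $C_m$.

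To conclude that $C_{m+1}$ is the \emph{first} (or unique) Clifford system on $\RR^{2N}$, I would argue by dimension using Table B. The hypothesis that $C_m$ is last on $\RR^N$ forces $N=2\delta(m)$ with $\delta(m+1)=2\delta(m)$, so $2N=2\delta(m+1)$ is exactly the minimal dimension admitting a Clifford system with $m+2$ members; any nontrivial invariant decomposition of $\RR^{2N}$ would produce a Clifford system in smaller-than-minimal dimension, a contradiction, so $C_{m+1}$ is irreducible. In the cases $m+1\equiv 0\pmod 4$, where two inequivalent classes exist, starting from either representative of $C_m$ and varying the signs in the template yields the two classes of $C_{m+1}$, cf.\ Remark~\ref{two}.

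The final assertion, about enlarging the system by further endomorphisms $Q_\beta$ built from unused unit quaternions or octonions, is where the argument becomes case-by-case rather than formal, and is where I expect the main obstacle to lie. The point is that if the $P_{0\alpha}$ are identified with (block-wise) right multiplications by some of the imaginary units in $\HH$ or $\OO$, then the anticommutation relations in those composition algebras supply, for each unused imaginary unit, a new skew-symmetric complex structure anticommuting with the $P_{0\alpha}$; inserting it in place of $P_{0\alpha}$ in the block template defining $Q_\alpha$ produces an additional member of the Clifford system, with the total number of admissible additions capped by Table B for the given ambient dimension.
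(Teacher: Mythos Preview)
Your proposal is correct and follows the same approach as the paper: the paper's proof simply observes that $P_{0\alpha}P_{0\alpha}=-\Id$ and declares the Clifford system verification ``straightforward,'' then defers the extension-by-further-$Q_\beta$ claim to the explicit case-by-case constructions of $C_4$, $C_6,C_7,C_8$, and $C_{12},C_{14},C_{15},C_{16}$ in the surrounding sections. You spell out the block computations in full and add an explicit irreducibility argument via the minimal-dimension constraint from Table~B, which the paper leaves implicit; otherwise the two proofs coincide.
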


\begin{proof} Since $P_{0\alpha}P_{0\alpha}=- \Id$, it is straightforward that $C_{m+1}$ is a Clifford system. As for the statement concerning the further $Q_\beta$, its proof follows as in the cases of $C_4$ (already seen), the further cases of $C_6,C_7,C_8$ in the next Section, and of $C_{12} ,C_{14,},C_{15},C_{16}$ in Section \ref{other 7}.
\end{proof}

We now discuss some aspects of Clifford systems and of even Clifford structures (defined in the Introduction) related with representation theory of Clifford algebras. As pointed out in in \cite[pages 482--483]{fkm}, any irreducible Clifford system $C_m=(P_0, \dots , P_m)$ in $\RR^N,  \, N=2\delta (m)$, gives rise to an irreducible representation of the Clifford algebra $\Cl{0,m-1}$ in $\RR^{\delta(m)}$. This latter is given by skew-symmetric  matrices
\[
E_1, \dots , E_{m-1} \in \mathfrak{so} (\delta (m))
\]
satisfying
\[
E_\alpha E_\beta +E_\beta E_\alpha =-2\delta_{\alpha \beta} I.
\]
To get such matrices $E_\alpha$ from $C_m$ consider the $\delta (m)$-dimensional subspace $E_+$ of $\RR^N$ defined as the $(+1)$-eigenspace  of $P_0$, and observe that $E_+$ is also invariant under the compositions $P_1P_{\alpha +1}$. Then define the skew-symmetric endomorphisms on $\RR^{\delta (m)}$
\[
E_\alpha =P_\alpha P_{\alpha +1} \vert_{E_+} \quad (\alpha =1,\dots , m-1).
\]
This gives the system of $E_1, \dots , E_{m-1} \in \mathfrak{so}(\delta (m))$, thus a representation of the Clifford algebra $\Cl{0,m-1}$ in $\RR^{\delta(m)}$. Conversely, given the anti-commuting $E_1, \dots , E_{m-1} \in \mathfrak{so}(\delta (m))$, define on $\RR^N$ the Clifford system $C_m$ given by the symmetric involutions
\[
P_0(u,v)=(v,u), \dots , P_\alpha (u,v)=(-E_\alpha v, E_\alpha u), \dots , P_m(u,v)=(u,-v).
\]

As a consequence, the procedure given by Theorem \ref{Procedure} can be seen as rephrasing the way to get irreducible representations of Clifford algebras. For these latter one can see \cite[pages 30--40]{lm}, and more explicitly the construction of Clifford algebra representations in \cite[pages 18--20]{tr}.

\begin{remark} An alternative aspect of Clifford systems is to look at $C_m$ in $\RR^N, \, N=2 \delta (m)$, as a representation of the Clifford algebra $\Cl{0,m+1}$ in $\RR^N$ such that any vector of the pseudo-euclidean $\RR^{0,m+1} \subset \Cl{0,m+1}$ acts as a symmetric endomorphism in $\RR^N$.
\end{remark}

Recall now from the structure of Clifford algebras the following periodicity relations
\[
\Cl{m+8} \cong \Cl{m} \otimes \RR (16), \qquad \Cl{0, m+8} \cong \Cl{0,m} \otimes \RR (16),
\]
where $\RR(16)$ denotes the algebras of all real matrices of order $16$. 




Look now at the even Clifford structures, mentioned in the Introduction. First observe that a natural notion of irreducibility can be given for them, by requiring the Euclidean vector bundle $(E,h)$ not to be a direct sum. Then, by definition an irreducible even Clifford structure of rank $m+1$ is equivalent to an irreducible representation of the even Clifford algebra $\Cl{m+1}^0 \cong \Cl{m}$ in $\RR^N, \, N=2\delta (m)$, mapping $\Lambda^2 \RR^{m+1}$ into skew-symmetric endomorphisms of $\RR^N$. 

The mentioned representations are listed, for low values of $m$, in the following:

\begin{table}[h]\label{m-1}
\caption{Representations of $\Cl{0,m-1}$ in $\RR^{\delta (m)}$ and of $\Cl{m+1}^0$ in $\RR^{2\delta (m)}$}
\renewcommand{\arraystretch}{1.35}
\begin{center}
\resizebox*{1.00\textwidth}{!}{
\begin{tabular}{|c||c|c|c|c|c|c|c|c|c|c|c|c|}
\hline
$m$&$1$&$2$&$3$&$4$&$5$&$6$&$7$&$8$&$9$\\
\hline
\hline
\hline
$\delta (m)$&$1$&$2$&$4$&$4$&$8$&$8$&$8$&$8$&$16$\\
\hline
$\Cl{0,m-1}$&$\RR$&$\RR \oplus \RR$&$\RR (2)$&$\CC (2) $&$\HH (2)$&$\HH (2) \oplus \HH (2)$&$\HH (4)$&$ \CC (8)$&$\RR (16)$\\
\hline
\hline
\hline
$2\delta (m)$&$2$&$4$&$8$&$8$&$16$&$16$&$16$&$16$&$32$\\
\hline
$\Cl{m+1}^0 \cong \Cl{m}$&$\CC$&$\HH$&$\HH \oplus \HH$&$\HH (2)$&$\CC (4)$&$\RR (8)$&$\RR (8) \oplus \RR (8)$&$\RR (16)$&$\CC (16)$\\
\hline
\end{tabular}
}
\end{center}
\end{table}

Of course, a Clifford system $C_m=(P_0, \dots , P_m)$ in $\RR^N$ gives rise to an even Clifford structure on the same $\RR^N$ just by requiring the vector bundle $E$ to be the vector sub-bundle of the endomorphism bundle generated by $P_0, \dots , P_m$. Not every irreducible even Clifford structure can be obtained in this way, and not only by dimensional reasons, as we will see on manifolds, cf. Section \ref{sectionEIII}. We call \emph{essential} an irreducible even Clifford structure that is not induced by an irreducible Clifford system. Thus, to see whether an irreducible even Clifford structure is essential, Table C and the mentioned periodicity relation can be used. This gives the following:

\begin{proposition}\label{basic}
Irreducible even Clifford structures of rank $m+1$ on $\RR^{2\delta (m)}$ are essential when $m \equiv 3,5,6,7$ mod. $8$, and non essential when $m=0,4$ mod $8$. For $m=1,2$ mod $8$ both possibilities are open.
\end{proposition}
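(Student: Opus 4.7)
The plan is to rephrase the statement in terms of real representations of Clifford algebras and then proceed by a case analysis on $m\bmod 8$, using Table~C together with the Bott periodicity $\Cl{m+8}\cong\Cl{m}\otimes\RR(16)$ recalled above. By the correspondence established in Section~\ref{Statements}, an irreducible Clifford system $C_m$ on $\RR^{2\delta(m)}$ is equivalent to an irreducible real representation of $\Cl{0,m-1}$ on $\RR^{\delta(m)}$ by skew-symmetric operators; passing from this to the associated even Clifford structure amounts to the doubling $\RR^{\delta(m)}\oplus\RR^{\delta(m)}=\RR^{2\delta(m)}$ together with the induced $\Cl{m+1}^0\cong\Cl{m}$-action. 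An irreducible even Clifford structure of rank $m+1$ is, by definition, an irreducible real $\Cl{m}$-module with $\Lambda^2\RR^{m+1}$ acting by skew-symmetric endomorphisms, and it is essential precisely when it does not arise from such a doubling.

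Next, I would read off from Table~C (extended by the periodicity above) the real dimension $d(m)$ of an irreducible $\Cl{m}$-module and whether $\Cl{m}$ is simple or splits as $A\oplus A$. For $m\equiv 0,4\bmod 8$ the algebra $\Cl{m}$ is a simple matrix algebra over $\RR$ or $\HH$ with $d(m)=2\delta(m)$: the unique irreducible $\Cl{m}$-module coincides in dimension with the ambient space of Clifford systems, and the Clifford system $C_m$ (whose existence is guaranteed in this dimension) produces exactly this module. Hence every irreducible even Clifford structure of rank $m+1$ is induced by a Clifford system, i.e.\ non-essential. For $m\equiv 3,5,6,7\bmod 8$ one instead finds $d(m)=\delta(m)<2\delta(m)$: an irreducible $\Cl{m}$-module has only half the ambient dimension, so the doubling construction produces $\Cl{m}$-modules that split as direct sums of two irreducibles. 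Any irreducible even Clifford structure of rank $m+1$ must therefore be essential.

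For the intermediate case $m\equiv 1,2\bmod 8$, $\Cl{m}$ is again simple with $d(m)=2\delta(m)$, exactly as in the non-essential cases, so the $\Cl{m}$-module type alone does not decide the question. However, the irreducible module now carries an intrinsic $\CC$- or $\HH$-linear structure, which allows $O(N)$-inequivalent embeddings of $\Lambda^2\RR^{m+1}$ into $\lieso{N}$. Some of these embeddings come from Clifford systems while others do not, and concrete geometric examples (such as those arising later from the octonionic geometry of the symmetric spaces listed in Table~A) produce both essential and non-essential irreducible even Clifford structures. This is where I expect the main obstacle to lie: the representation-theoretic data is insufficient in these cases, and one must inspect the finer placement of the image of $\Lambda^2\RR^{m+1}$ inside $\lieso{N}$ to distinguish the two possibilities.
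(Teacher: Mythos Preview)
Your approach is exactly what the paper intends: it offers no proof beyond the sentence ``to see whether an irreducible even Clifford structure is essential, Table~C and the mentioned periodicity relation can be used,'' and your case analysis comparing the real dimension $d(m)$ of an irreducible $\Cl{m}$-module with $2\delta(m)$ is precisely how one reads the proposition off Table~C. The dichotomy you identify --- $d(m)=\delta(m)$ for $m\equiv 3,5,6,7\bmod 8$ versus $d(m)=2\delta(m)$ for $m\equiv 0,1,2,4\bmod 8$ --- is correct, and the conclusion in the first four cases (the doubled module is reducible, hence cannot be an irreducible even Clifford structure) is the right mechanism.

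One small over-reach: in the $m\equiv 1,2\bmod 8$ case you assert that concrete examples later in the paper realise \emph{both} possibilities on $\RR^{2\delta(m)}$. The paper is more cautious: ``both possibilities are open'' means only that Table~C does not decide the question. The essential example for $m=9$ (Theorem~\ref{EIII} on $\EIII$) is proved by a \emph{global} argument on the manifold (triviality of the canonical bundle versus positive Ricci curvature), not by exhibiting an essential structure on flat $\RR^{32}$; the paper explicitly notes that Proposition~\ref{basic} does not apply there. So your final paragraph should stop at ``the representation-theoretic data is insufficient in these cases,'' which is all the proposition actually claims.
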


We will can back to this point on manifolds, see the last Sections.

\section{From $\HH$ to $\OO$: the Clifford systems $C_5,C_6,C_7,C_8$}\label{R16}

According to Table B and to Theorem \ref{Procedure}, the next Clifford system to consider is 
\[
C_5=(S'_0,S'_1,S'_2,S'_3,S'_4,S'_5)
\] 
in $\RR^{16}$, where:
\begin{equation*}
\begin{split}
S'_0&=\left(
\begin{array}{r|r}
0 & \Id \\
\hline 
\Id & 0
\end{array}\right),\qquad
S'_1=\left(
\begin{array}{c|c}
0 & -Q_{01} \\
\hline 
Q_{01}& 0
\end{array}\right),\qquad
S'_2=\left(
\begin{array}{c|c}
0 & -Q_{02} \\
\hline 
Q_{02}& 0
\end{array}\right),\\
S'_3&=\left(
\begin{array}{c|c}
0 & -Q_{03} \\
\hline 
Q_{03}& 0
\end{array}\right),\qquad
S'_4=\left(
\begin{array}{c|c}
0 & -Q_{04} \\
\hline 
Q_{04}& 0
\end{array}\right),\qquad
S'_5=\left(
\begin{array}{r|r}
\Id & 0 \\
\hline
0 & -\Id
\end{array}\right).
\end{split}
\end{equation*}

A computation shows that:

\begin{proposition}\label{C_5}
The orthogonal transformations in $\RR^{16}$ commuting with the individual $S'_0, \dots , S'_5$ are the ones in the diagonal $\Sp{1}_\Delta \subset \SO{16}.$ The orthogonal transformations preserving the vector subspace $E^6 = <C_5> \subset \End{\RR^{16}}$ are the ones in the subgroup $\SU{4} \cdot \Sp{1} \subset \SO{16}$.The complex structures $S'_{\alpha \beta} = S'_\alpha S'_\beta$ are for $\alpha < \beta$ a basis of a Lie subalgebra $\mathfrak{su}(4) \subset \mathfrak{so}(16)$.
\end{proposition}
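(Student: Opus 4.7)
The plan is to mirror the block-matrix strategy used in Propositions~\ref{C_2}--\ref{C_4} and to combine it with the Clifford algebra representation theory from Section~\ref{Statements}. I would split the argument into three natural steps, one for each clause of the statement.

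Step 1 (individual commutant). Writing $B\in\SO{16}$ as a $2\times 2$ array of $8\times 8$ blocks, commutation with $S'_0$ (off-diagonal identity) and $S'_5$ (diagonal $\Id,-\Id$) forces $B$ to be block-diagonal with both diagonal blocks equal to a common $B'\in\SO{8}$. The remaining commutation relations with $S'_1,S'_2,S'_3,S'_4$ become $[B',Q_{0\alpha}]=0$ for $\alpha=1,2,3,4$, i.e.\ $B'$ must commute with $R_i,R_j,R_k,R_e$ on $\OO\cong\RR^8$. Rather than unwind this via a direct octonion computation, I would invoke Clifford representation theory: the six anticommuting symmetric involutions $S'_0,\dots,S'_5$ generate a representation of $\Cl{6,0}\cong\HH(4)$, whose unique real irreducible module is $\HH^4\cong\RR^{16}$. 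By Schur's lemma the centralizer is $\HH$, whose unit part meets $\SO{16}$ in a single $\Sp{1}$; the block form $B=B'\oplus B'$ already established identifies this with the diagonal embedding $\Sp{1}_\Delta\subset\SO{16}$ coming from Proposition~\ref{C_4}.

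Step 2 (stabilizer of $E^6$). Any $A\in\SO{16}$ preserving $E^6=\langle S'_0,\dots,S'_5\rangle$ conjugates the orthonormal basis $(S'_\alpha)$ into another such basis, producing an orthogonal change of frame, hence a homomorphism $\rho\colon\mathrm{Stab}(E^6)\to\SO{6}$ whose kernel is precisely the commutant computed in Step~1, namely $\Sp{1}_\Delta$. Surjectivity of $\rho$ follows from the spin representation $\Spin{6}\to\SO{16}$ canonically provided by the Clifford system, which covers $\SO{6}$. The accidental isomorphism $\Spin{6}\cong\SU{4}$, combined with the $\Sp{1}_\Delta$ kernel, then identifies $\mathrm{Stab}(E^6)$ with the central product $\SU{4}\cdot\Sp{1}$, the common $\ZZ_2$ being the element $-\Id$.

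Step 3 (Lie algebra identification). The $\binom{6}{2}=15$ products $S'_{\alpha\beta}=S'_\alpha S'_\beta$ with $\alpha<\beta$ are skew-symmetric (each factor is symmetric and the two anticommute) and linearly independent; the Clifford relations ensure that their commutators stay within this span, so they form a basis of a $15$-dimensional Lie subalgebra of $\lieso{16}$. By construction this is the image of $\Lambda^2\RR^6\hookrightarrow\Cl{6,0}^0$ acting on $\RR^{16}$, hence isomorphic to $\lieso{6}\cong\liesu{4}$; moreover it integrates to the $\SU{4}$ factor of Step~2. The main obstacle I anticipate is not the existence of the subgroup $\SU{4}\cdot\Sp{1}$ but rather the bookkeeping of the diagonal embedding of $\Sp{1}$ and of the common $\ZZ_2$ in the central product; the appeal to Schur's lemma is what replaces the tedious octonion calculation that would otherwise be required in Step~1.
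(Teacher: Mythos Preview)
Your argument is sound and considerably more informative than the paper's, which consists of the single sentence ``A computation shows that.'' You replace the brute-force matrix check by a structural argument: Schur's lemma for the simple algebra generated by $C_5$, namely $\HH(4)$, pins down the centralizer in Step~1; the short exact sequence with kernel $\Sp{1}_\Delta$ together with the accidental isomorphism $\Spin{6}\cong\SU{4}$ handles Step~2; and the standard identification $\Lambda^2\RR^6\cong\lieso{6}\cong\liesu{4}$ gives Step~3. This is precisely the representation-theoretic viewpoint advertised in Section~\ref{Statements}, which the paper itself does not actually deploy for this proposition.

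One genuine gap in Step~2: the map $\rho$ lands in $\mathrm{O}(6)$, not automatically in $\SO{6}$ as you assert. Indeed $S'_0\in\SO{16}$ (its $\pm 1$ eigenspaces are both $8$-dimensional), and conjugation by $S'_0$ sends $S'_0\mapsto S'_0$ and $S'_\alpha\mapsto -S'_\alpha$ for $\alpha\ge 1$, an orthogonal transformation of $E^6$ of determinant $(-1)^5=-1$. So the full set-wise stabilizer of $E^6$ in $\SO{16}$ has two components; the connected group $\SU{4}\cdot\Sp{1}$ is its identity component, the preimage $\rho^{-1}(\SO{6})$. This is the group that matters for the paper's purposes, since Clifford systems on manifolds are glued by $\SO{m+1}$-valued transition functions (cf.\ the definition preceding Table~D), and the proposition should be read accordingly. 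With this caveat your exact-sequence argument goes through. A minor notational point: since the $S'_\alpha$ square to $+\Id$, the relevant Clifford algebra in the paper's convention is $\Cl{0,6}$ rather than $\Cl{6,0}$; either way you have correctly identified it as $\HH(4)$.
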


By reminding that

\begin{equation*}
Q_{01}=R_i,\enskip
Q_{02}=R_j,\enskip
Q_{03}= R_k,\enskip
Q_{04}= R_e,\enskip
\end{equation*}
are the right multiplications on $\OO$ by $i,j,k,e$, one completes $C_5$ to the Clifford system 
\[
C_8=(S_0,S_1,S_2,S_3,S_4,S_5,S_6,S_7,S_8)
\]
with
\[
S_0=S'_0,\enskip
S_1=S'_1,\enskip
S_2=S'_2,\enskip
S_3=S'_3,\enskip
S_4=S'_4,\enskip
S_8=S'_5
\]
and
\begin{equation*}
S_5=\left(
\begin{array}{r|r}
0 & -R_f \\
\hline 
R_f & 0
\end{array}\right),\qquad
S_6=\left(
\begin{array}{c|c}
0 & -R_g \\
\hline 
R_g& 0
\end{array}\right),\qquad
S_7=\left(
\begin{array}{c|c}
0 & -R_h \\
\hline 
R_h& 0
\end{array}\right).
\end{equation*}

It is now natural to compare the Clifford system $C_8$ with the following notion, that was proposed by Th.~Friedrich in \cite{fr}.
\begin{definition}\label{defspin9}
A $\Spin{9}$-\emph{structure} on a 16-dimensional Riemannian manifold $(M,g)$ is a rank $9$ real vector bundle 
\[
E^9\subset\End{TM}\rightarrow M,
\] 
locally spanned by self-dual anti-commuting involutions $S_{\alpha}: TM \rightarrow TM$:  
\begin{equation*}
\begin{split}
g(S_{\alpha} X,Y)&=g(X,S_{\alpha}Y), \qquad \alpha=1,\dots,9,\\
S_{\alpha} S_{\beta}&=-S_{\beta} S_{\alpha}, \qquad \alpha\neq\beta,\\
S_{\alpha}^2&=\Id.
\end{split}
\end{equation*}
\end{definition}

From this datum one gets on $M$ local almost complex structures 
$S_{\alpha\beta}=S_{\alpha} S_{\beta}$,
and the $9\times 9$ skew-symmetric matrix of their K\"ahler $2$-forms 
\begin{equation*}
\psi = (\psi_{\alpha\beta})
\end{equation*}
is naturally associated. The differential forms $\psi_{\alpha\beta}$, $\alpha<\beta$, are thus \emph{a local system of K\"ahler $2$-forms} on the $\Spin{9}$-manifold $(M^{16},E^9)$.

On the model space $\RR^{16}$, the standard $\Spin{9}$-structure is defined by the generators $S_1, \dots, S_9$ of the Clifford algebra $\Cl{9}$, the endomorphisms' algebra of its $16$-dimensional real representation $\Delta_9 = \RR^{16} = \OO^2$.
Accordingly, unit vectors in $\RR^9$ can be viewed as self-dual endomorphisms 
\[
w: \Delta_9 = \OO^2 \rightarrow \Delta_9 = \OO^2,
\] 
and the action of $w = u + r \in S^8$ ($u \in \OO$, $r \in \RR$, $u\overline u + r^2 =1$), on pairs $(x,x') \in \OO^2$ is given by
\begin{equation}\label{HarSpC}
\left(
\begin{array}{c}
x \\
x'
\end{array}
\right)
\longrightarrow
\left(
\begin{array}{cc}
r & R_{\overline u} \\ 
R_u & -r
\end{array}
\right)  
\left(
\begin{array}{c} 
x \\ 
x'
\end{array}
\right),
\end{equation}
where $R_u, R_{\overline u}$ denote the right multiplications by $u, \overline u$, respectively  (cf.~\cite[page 288]{ha}). The choices
\[
w = (0,1),(0,i),(0,j),(0,k),(0,e),(0,f),(0,g),(0,h)\text{ and }(1,0) \in S^8 \subset \OO \times \RR = \RR^9,
\]
define the symmetric endomorphisms:
\begin{equation*}
S_0, S_1, \dots , S_8
\end{equation*}
that constitute our Clifford system $C_8$.

The subgroup $\Spin{9} \subset \SO{16}$ is characterized as preserving the vector subspace
\begin{equation*}
E^9 = <S_0,\dots,S_8>\subset \End{\RR^{16}},
\end{equation*}
whereas it is easy to check that the only matrices in $\SO{16}$ that preserve all the individual involutions $S_0, \dots , S_8$ are just $\pm \Id$.

It is useful to have explicitly the following right multiplications on $\OO$:

\begin{equation*}
\begin{split}
\RO_i=\left(
\begin{array}{c|c}
\RH_i & 0 \\ \hline
0 & -\RH_i
\end{array}
\right),\qquad
\RO_j&=\left(
\begin{array}{c|c}
\RH_j & 0 \\ \hline
0 & -\RH_j
\end{array}
\right),\qquad
\RO_k=\left(
\begin{array}{c|c}
\RH_k & 0 \\ \hline
0 & -\RH_k
\end{array}
\right),\\
\RO_e&=\left(
\begin{array}{c|c}
0 & -\Id \\ \hline
\Id & 0
\end{array}
\right),\\
\RO_f=\left(
\begin{array}{c|c}
0 & \LH_i \\ \hline
\LH_i & 0
\end{array}
\right),\qquad
\RO_g&=\left(
\begin{array}{c|c}
0 & \LH_j \\ \hline
\LH_j & 0
\end{array}
\right),\qquad
\RO_h=\left(
\begin{array}{c|c}
0 & \LH_k \\ \hline
\LH_k & 0
\end{array}
\right).
\end{split}
\end{equation*}

The space $\Lambda^2\RR^{16}$ of $2$-forms in $\RR^{16}$ decomposes under $\Spin{9}$ as ~\cite[page 146]{fr}: 
\begin{equation}\label{decomposition}
\Lambda^2\RR^{16} = \Lambda^2_{36} \oplus \Lambda^2_{84}
\end{equation}
where  $\Lambda^2_{36} \cong \liespin{9}$ and $ \Lambda^2_{84}$ is an orthogonal complement in $\Lambda^2 \cong \lieso{16}$. Bases of the two subspaces are given respectively by 
\[
S_{\alpha \beta} = S_\alpha S_\beta\qquad\text{if }\alpha <\beta \qquad \text{and} \qquad
S_{\alpha \beta \gamma} = S_\alpha S_\beta S_\gamma \qquad \text{if }\alpha <\beta<\gamma,
\] 
all complex structures on $\RR^{16}$.
We will need for later use the following ones:
\[
\resizebox*{1.00\textwidth}{!}{%
$
S_{01}=\left(
\begin{array}{c|c}
\RO_i & 0 \\ \hline
0 & -\RO_i
\end{array}
\right),\enskip
S_{02}=\left(
\begin{array}{c|c}
\RO_j & 0 \\ \hline
0 & -\RO_j
\end{array}
\right),\enskip
S_{03}=\left(
\begin{array}{c|c}
\RO_k & 0 \\ \hline
0 & -\RO_k
\end{array}
\right),\enskip
S_{04}=\left(
\begin{array}{c|c}
\RO_e & 0 \\ \hline
0 & -\RO_e
\end{array}
\right),
$
}
\]
\[
\resizebox*{1.00\textwidth}{!}{%
$
S_{05}=\left(
\begin{array}{c|c}
\RO_f & 0 \\ \hline
0 & -\RO_f
\end{array}
\right),\enskip
S_{06}=\left(
\begin{array}{c|c}
\RO_g & 0 \\ \hline
0 & -\RO_g
\end{array}
\right),\enskip
S_{07}=\left(
\begin{array}{c|c}
\RO_h & 0 \\ \hline
0 & -\RO_h
\end{array}
\right),\enskip
S_{08}=\left(
\begin{array}{c|c}
0 & -\Id \\ \hline
\Id & 0
\end{array}
\right).
$
}
\]

Via invariant polynomials, one can then get global differential forms on manifolds $M^{16}$, and prove the following facts, completing some of the statements already proved in \cite{pp}:
\begin{theorem}\label{acs:7->8->9}
(i) The families of complex structures
\[
S^A = \{S_{\alpha \beta}\}_{1 \leq \alpha < \beta \leq 7}, \qquad S^B = \{S_{\alpha \beta}\}_{0 \leq \alpha < \beta \leq 7}, \qquad \text{and} \; \; \;S^C = \{S_{\alpha \beta}\}_{0 \leq \alpha < \beta \leq 8},
\]
provide bases of Lie subalgebras $\mathfrak{spin}_\Delta(7)$, $\liespin{8}$, and $\liespin{9} \subset \mathfrak{so}(16)$, respectively.\\
\noindent(ii) Let
\[
\psi^A=(\psi_{\alpha \beta})_{1 \leq \alpha, \beta \leq 7}, \qquad \psi^B=(\psi_{\alpha \beta})_{0 \leq \alpha, \beta \leq 7}, \qquad 
\psi^C =(\psi_{\alpha \beta})_{0 \leq \alpha, \beta \leq 8}
\]
be the skew-symmetric matrices of the K\"ahler 2-forms associated with the mentioned families of complex structures $S _{\alpha \beta}$. If $\tau_2$ and $\tau_4$ are the second and fourth coefficient of the characteristic polynomial, then:
\[
\frac{1}{6} \tau_2(\psi^A) = \Phi_{\mathrm{Spin}_\Delta(7)}, \quad \frac{1}{4}\tau_2(\psi^B) = \Phi_{\Spin{8}}, \quad  
\tau_2(\psi^C)=0, \quad  \frac{1}{360} \tau_4(\psi^C) = \Phi_{\Spin{9}},
\]
where 
$ \Phi_{\mathrm{Spin}_\Delta(7)} \in \Lambda^4(\RR^{16})$ restricts on any summand of $\RR^{16} =\RR^8 \oplus \RR^8$ to the usual $\Spin{7}$ $4$-form, and where
$ \Phi_{\Spin{9}} \in \Lambda^8(\RR^{16})$ is the canonical form associated with 
the standard $\Spin{9}$-structure in $\RR^{16}$. 
\end{theorem}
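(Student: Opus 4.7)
The plan is to handle (i) via Clifford algebra representation theory and (ii) by combining an invariance argument with the known structure of invariant forms under the relevant spin subgroups. For (i), the Clifford system $C_8=(S_0,\dots,S_8)$ determines a representation of $\Cl{9}$ on $\RR^{16}$, sending each generator to $S_\alpha$; inside the even subalgebra $\Cl{9}^0$ the Lie subalgebra $\liespin{9}$ is exactly the span of the two-fold products $\tfrac12 e_\alpha e_\beta$, so its image is the span of the $36$ composites $S_{\alpha\beta}$ for $0\le\alpha<\beta\le 8$, yielding the embedding $\liespin{9}\hookrightarrow\lieso{16}$ claimed for $S^C$. The same reasoning applied to the sub-systems $(S_0,\dots,S_7)$ and $(S_1,\dots,S_7)$ produces the $\liespin{8}$ and $\liespin{7}$ statements for $S^B$ and $S^A$. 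The diagonal character of the latter then follows by inspection of the explicit matrices: each $S_\alpha$ with $\alpha\in\{1,\dots,7\}$ is block antidiagonal with blocks $\pm \RO_\xi$ on $\OO$, so every $S_\alpha S_\beta$ is block diagonal with equal blocks on the two $\OO$-summands of $\RR^{16}=\RR^8\oplus\RR^8$, producing the $\Delta$-subscript.

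For (ii) the key observation is that the natural $\SO{m+1}$-rotation of the generators $(S_0,\dots,S_m)$ induces conjugation on the matrix $\psi=(\psi_{\alpha\beta})$, so every characteristic-polynomial coefficient $\tau_{2k}(\psi)$ is $\SO{m+1}$-invariant as a polynomial in the entries and hence descends to a differential form on $\RR^{16}$ intrinsic to $E^{m+1}$, invariant under the corresponding spin subgroup. For $\psi^A$ and $\psi^B$ I would identify the resulting $4$-form by restriction to a single $\RR^8$-summand, where the $S_\alpha$ act as right multiplications by units of $\OO$: the restricted $\psi_{\alpha\beta}$ become K\"ahler forms of the compositions $\RO_\xi\RO_\eta$, and by uniqueness (up to scalar) of the $\Spin{7}$-invariant $4$-form on $\RR^8$ one obtains $\tfrac16\tau_2(\psi^A)|_{\RR^8}=\Phi_{\Spin{7}}$ after a short monomial-coefficient check; the $\Spin{8}$ case proceeds in parallel, using the explicit expansion of $\Phi_{\Spin{8}}$ recorded in \cite{pp,pp2}.

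For $\psi^C$ the argument is cleanest through representation theory: the $16$-dimensional real spin representation $\Delta_9=\RR^{16}$ has $\Spin{9}$-invariant exterior algebra concentrated in degrees $0$, $8$, $16$ only, so no $\Spin{9}$-invariant $4$-form exists on $\RR^{16}$ and therefore $\tau_2(\psi^C)=0$ with no computation needed. The invariant $8$-form $\tau_4(\psi^C)$ is non-zero and must therefore be proportional to the canonical $\Phi_{\Spin{9}}$; the proportionality constant $1/360$ is then fixed by matching the coefficient of a single monomial, for instance $\shortform{12345678}$, on both sides.

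The main obstacle is pinning down the three normalization constants $1/6,\,1/4,\,1/360$, the last being the most delicate: $\tau_4(\psi^C)$ expands combinatorially into $\binom{9}{4}\cdot 3=378$ signed products, each a wedge of four $2$-forms with $4$ or $8$ monomials apiece. I would carry this out via a \mathematica\ computation on the explicit $S_{\alpha\beta}$ listed above, and use the same routine to cross-check $\tau_2(\psi^C)=0$ by exhibiting the pairwise cancellation among the $36$ squares $\psi_{\alpha\beta}^2$.
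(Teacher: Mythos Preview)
Your proposal is correct but takes a genuinely different route from the paper. For part~(i), the paper does not invoke the abstract identification of $\liespin{m+1}$ with the span of two-fold Clifford products; instead it cites \cite{fr,pp} for $S^C$ and then uses Harvey's concrete model \eqref{HarSpC}: elements of $\liespin{8}\subset\liespin{9}$ are the block-diagonal matrices $\left(\begin{smallmatrix}a_+&0\\0&a_-\end{smallmatrix}\right)$ with $a_\pm\in\lieso{8}$ triality companions, and $\mathfrak{spin}_\Delta(7)$ is the sublocus $a_+=a_-$. Checking which $S_{\alpha\beta}$ satisfy these conditions then singles out $S^B$ and $S^A$. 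Your Clifford-algebra argument is cleaner and more uniform; the paper's triality description, on the other hand, explains the diagonal subscript intrinsically rather than by matrix inspection. For part~(ii) the contrast is sharper: the paper simply writes out all $\psi_{\alpha\beta}$ in coordinates (following \cite[pp.~334--335]{pp}) and computes $\tau_2(\psi^A)$, $\tau_2(\psi^B)$ and, by citation, $\tau_2(\psi^C)$ and $\tau_4(\psi^C)$ directly; in particular $\Phi_{\Spin{8}}$ is \emph{defined} there as $\tfrac14\tau_2(\psi^B)$, so that identity is a definition rather than a theorem. Your invariance-plus-uniqueness argument is more conceptual and gives $\tau_2(\psi^C)=0$ for free from the absence of $\Spin{9}$-invariant $4$-forms on $\Delta_9$, but note that for $\Phi_{\Spin{8}}$ there is no independent target form to match against, so that case reduces to the paper's computational route anyway. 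Both approaches ultimately rely on a machine check for the constants $1/6$ and $1/360$.
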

The 8-form $\Phi_{\Spin{9}}$ was originally defined by M. Berger in 1972, cf. \cite{be}. 
\begin{proof}
(i) The three families refer to Lie subalgebras of $\liespin{9}$. Now, the family $S^C = \{S_{\alpha \beta}\}_{0 \leq \alpha < \beta \leq 8}$ is known to be a basis of  $\liespin{9}$, cf. \cite{fr,pp}. Look at the construction of the  $S_{\alpha \beta} = S_\alpha S_\beta$,
following the approach to $\Spin{9}$ as generated by transformations of type \eqref{HarSpC} (cf.~\cite[pages 278--279]{ha}). In this construction, matrices in the subalgebra $\liespin{8} \subset \liespin{9}$ are characterized through the infinitesimal triality principle as:
\[
\left(
\begin{array}{cc}
a_+ & 0 \\ 
0& a_-
\end{array}
\right), 
\]
where $a_+,a_- \in \mathfrak{so}(8)$ are \emph{triality companions}, i.e.~for each $u \in \OO$ there exists \mbox{$v=a_0(u)$} such that $R_v = a_+ R_u a_-^t$. It is easily checked that all matrices $S_{\alpha\beta}$ with $0 \leq \alpha < \beta \leq 7$ satisfy this condition. Moreover, matrices in  $\mathfrak{spin}_\Delta(7) \subset \liespin{8}$ are characterized as those with $a_+=a_-$ (thus $a_0= 1)$ (\cite[pages 278--279, 285]{ha}). Thus only the $S_{\alpha \beta}$ with $1 \leq \alpha < \beta \leq 7$ are in $\mathfrak{spin}_\Delta(7)$.\\
\noindent(ii) Here one can write explicit expressions of the $\psi_{\alpha \beta}$ in the coordinates of $\RR^{16}$ (cf.~\cite[pages 334--335]{pp}). These formulas allow to compute the $\tau_2$ and the $\tau_4$ appearing in the statements. It is convenient to begin with the matrix $\psi^B$, by adding up squares of the 2-forms $\psi_{\alpha \beta}$ with $0 \leq \alpha <\beta \leq 7$:

\begin{equation*}
\begin{split}
\frac{1}{4} \tau_2(\psi^B)=\frac{1}{4}\sum_{0 \leq \alpha < \beta \leq 7}\psi^2_{\alpha \beta}&=\shortform{121'2'}+\shortform{123'4'}+\shortform{125'6'}-\shortform{127'8'}+\shortform{341'2'}+\shortform{343'4'}-\shortform{345'6'}\\
+\shortform{347'8'}+\shortform{561'2'}-\shortform{563'4'}+\shortform{565'6'}&+\shortform{567'8'}-\shortform{781'2'}+\shortform{783'4'}+\shortform{785'6'}+\shortform{787'8'}+\shortform{131'3'}-\shortform{132'4'}\\
+\shortform{135'7'}+\shortform{136'8'}-\shortform{241'3'}+\shortform{242'4'}&+\shortform{245'7'}+\shortform{246'8'}+\shortform{571'3'}+\shortform{572'4'}+\shortform{575'7'}-\shortform{576'8'}+\shortform{681'3'}\\
+\shortform{682'4'}-\shortform{685'7'}+\shortform{686'8'}+\shortform{141'4'}&+\shortform{142'3'}+\shortform{145'8'}-\shortform{146'7'}+\shortform{231'4'}+\shortform{232'3'}-\shortform{235'8'}+\shortform{236'7'}\\
+\shortform{581'4'}-\shortform{582'3'}+\shortform{585'8'}+\shortform{586'7'}&-\shortform{671'4'}+\shortform{672'3'}+\shortform{675'8'}+\shortform{676'7'}+\shortform{151'5'}-\shortform{152'6'}-\shortform{153'7'}\\
-\shortform{154'8'}-\shortform{261'5'}+\shortform{262'6'}-\shortform{263'7'}&-\shortform{264'8'}-\shortform{371'5'}-\shortform{372'6'}+\shortform{373'7'}-\shortform{374'8'}-\shortform{481'5'}-\shortform{482'6'}\\
-\shortform{483'7'}+\shortform{484'8'}+\shortform{161'6'}+\shortform{162'5'}&-\shortform{163'8'}+\shortform{164'7'}+\shortform{251'6'}+\shortform{252'5'}+\shortform{253'8'}-\shortform{254'7'}-\shortform{381'6'}\\
+\shortform{382'5'}+\shortform{383'8'}+\shortform{384'7'}+\shortform{471'6'}&-\shortform{472'5'}+\shortform{473'8'}+\shortform{474'7'}+\shortform{171'7'}+\shortform{172'8'}+\shortform{173'5'}-\shortform{174'6'}\\
+\shortform{281'7'}+\shortform{282'8'}-\shortform{283'5'}+\shortform{284'6'}&+\shortform{351'7'}-\shortform{352'8'}+\shortform{353'5'}+\shortform{354'6'}-\shortform{461'7'}+\shortform{462'8'}+\shortform{463'5'}\\
+\shortform{464'6'}+\shortform{181'8'}-\shortform{182'7'}+\shortform{183'6'}&+\shortform{184'5'}-\shortform{271'8'}+\shortform{272'7'}+\shortform{273'6'}+\shortform{274'5'}+\shortform{361'8'}+\shortform{362'7'}\\
+\shortform{363'6'}-\shortform{364'5'}+\shortform{451'8'}+\shortform{452'7'}&-\shortform{453'6'}+\shortform{454'5'},
\end{split}
\end{equation*}
and this can be defined as $\Phi_{\mathrm{Spin}(8)}$.
By computing the sum of squares of the $\psi_{\alpha \beta}$ with $1 \leq \alpha < \beta \leq 7$ one gets instead: 
\begin{equation*}
\begin{split}
\tau_2(\psi^A)&=\sum_{1 \leq \alpha < \beta \leq 7} \psi^2_{\alpha \beta}=\frac{6}{4}\tau_2(\psi^B)+6[\shortform{1234}+\shortform{5678}+\shortform{1'2'3'4'}+\shortform{5'6'7'8'}]\\
&-3[\shortform{15}+\shortform{26}+\shortform{37}+\shortform{48}]^2-3[\shortform{1'5'}+\shortform{2'6'}+\shortform{3'7'}+\shortform{4'8'}]^2 \\
&-6[\shortform{1278}-\shortform{1368}+\shortform{1467}+\shortform{2358}-\shortform{2457}+\shortform{3456}+\shortform{1'2'7'8'}-\shortform{1'3'6'8'}+\shortform{1'4'6'7'}+\shortform{2'3'5'8'}\\
&\phantom{\,\,\,-6[\shortform{1278}-\shortform{1368}+\shortform{1467}+\shortform{2358}-\shortform{2457}+\shortform{3456}+\shortform{1'2'7'8'}-\shortform{1'3'6'8'}}-\shortform{2'4'5'7'}+\shortform{3'4'5'6'}].
\end{split}
\end{equation*}
Thus, by defining $\Phi_{\mathrm{Spin}_\Delta(7)} = \frac{1}{6} \tau_2(\psi^A)$ one has that its restriction to any of the two summands $\RR^{16}=\RR^8 \oplus \RR^8$ is the usual $\Spin{7}$ form (cf. \cite[pages 332--333]{pp}).
Moreover, computations carried out in \cite[pages 336 and 343]{pp} show that:
\begin{equation*}
\tau_2(\psi^C) = \sum_{1 \leq \alpha < \beta \leq 9} \psi^2_{\alpha \beta} = 0, \qquad \Phi_{\Spin{9}}=\frac{1}{360}\tau_4(\psi^C).
\end{equation*}
\end{proof}

The coefficients in the above equalities are chosen in such a way that, when reading 
\[
\Phi_{\mathrm{Spin}_\Delta(7)}, \qquad \Phi_{\Spin{8}}\in \Lambda^4 , \qquad  \Phi_{\Spin{9}} \in \Lambda^8
\]
in the coordinates of $\RR^{16}$, the g.c.d.\ of coefficients be 1. 

Theorem \ref{acs:7->8->9} suggests to consider, besides the Clifford systems $C_5$ and $C_8$ on $\RR^{16}$, the following intermediate Clifford systems:
\[
C_6=(S_1, \dots , S_7), \qquad C_7=(S_0, \dots , S_7),
\]
or, in accordance with Theorem \ref{Procedure}, the equivalent Clifford systems
\[
C'_6=(S_0, S_1, \dots , S_5, S_8), \qquad C'_7=(S_0, S_1, \dots , S_6, S_8).
\]

\begin{remark}\label{Spin7}
Similarly to what observed in Remark \ref{two}, a representative of the other equivalence class of Clifford systems with $m=8$ can be constructed as $\tilde C_8=(S_0, \tilde S_1, \dots , \tilde S_7,S_8)$, where $\tilde S_1, \dots , \tilde S_7$ are defined like $S_1, \dots , S_7$ but using the left octonion multiplications $L_i ,\dots , L_h$ instead of the right ones $R_i,\dots ,R_h$.
\end{remark}

\begin{remark}\label{dh}
A family of calibrations $\phi_{4k}(\lambda) \in \Lambda^{4k}(\RR^{16})$ has been constructed by J. Dadok and F. R. Harvey for any $\lambda$ in the unit sphere $S^7 \subset \RR^8$ by squaring positive spinors $S(\lambda) \in {\bf S^+}(16) $, through the following procedure, cf. \cite{dh}. Write $\RR^{16}$ as ${\OO}^+ \oplus {\OO}^-$, and let $(s_1 =i, \dots , s_8=h)$ be the standard basis of the octonions $\OO^+, \OO^- $. Then one looks at the model for the Clifford algebra
\[
\begin{split}
\Lambda (\RR^{16}) \cong \mathrm{Cl_{16}} \cong \mathrm{Cl_{8}} \otimes \mathrm{Cl_{8}} \cong & \; \mathrm{End} (\OO^+ \oplus \OO^-) \otimes \mathrm{End} (\OO^+ \oplus \OO^-) \cong  \\
\mathrm{End} (\OO^+ \otimes \OO^+) \oplus \mathrm{End} (\OO^- \otimes \OO^-) & \oplus \mathrm{End} (\OO^+ \otimes \OO^-) \oplus \mathrm{End} (\OO^- \otimes \OO^+),
\end{split}
\]
and the spinors $S(\lambda)$ are in the diagonal $D \subset{\bf S^+}(16) =  (\OO^+ \otimes \OO^+) \oplus (\OO^- \otimes \OO^-)$. Then it is proved in \cite{dh} that by squaring such spinors one gets inhomogeneous exterior forms in $\RR^{16}$ as
\[
256 \; S(\lambda) \circ S(\lambda) = 1 + \phi_4 + \phi_8 +\phi_{12} + \, vol ,
\]
where the $\phi_{4k}$ are calibrations. In particular, calibrations corresponding to $\Spin{7}$ and $\Spin{8}$ geometries are determined and discussed in \cite{dh}. This construction can be related with the present point of view in terms of Clifford systems, as we plan to show in a forthcoming work. 
\end{remark}

\section{Clifford systems $C_m$ on Riemannian manifolds}

The definition \ref{defspin9} of a $\Spin{9}$ structure on a Riemannian manifold $M^{16}$, using locally defined Clifford systems $C_8$ on its tangent bundle, and yielding a rank 9 vector subbundle of the endomorphism bundle, suggests to give the following more general definition.

\begin{definition} A \emph{Clifford system $C_m$ on a Riemannian manifold $(M^N,g)$} is the datum of a rank $m+1$ vector subbundle $E^{m+1} \subset \End{TM}$ locally generated by Clifford systems that are related in the intersections of trivializing open sets by matrices in $\SO{m+1}$.
\end{definition}

Some of the former statements, like Propositions \ref{C_2}, \ref{C_3}, \ref{C_4}, \ref{C_5}, and similar properties discussed for $C_6,C_7,C_8$ in Section \ref{R16}, can be interpreted on Riemannian manifolds. One can then recognize that the datum of a Clifford system $C_m$ on a Riemannian manifold $M^N$, $N=2\delta(m)$, is equivalent to the reduction of its structure group to the group $G$ according to the following Table D.
\begin{table}[h]
\caption{Clifford systems $C_m$ and $G$-structures on Riemannian manifolds $M^N$}
\renewcommand{\arraystretch}{1.85}
\begin{center}
\resizebox*{1.00\textwidth}{!}{%
\begin{tabular}{|p{0.10in}  ||c|c|c|c|c|c|c|c|c|c|c|c|c|c|c|}
\hline
$m$&$1$&$2$&$3$&$4$&$5$&$6$&$7$&$8$&$9$&$10$&$11$&$12$\\
\hline 
$N$&$2$&$4$&$8$&$8$&$16$&$16$&$16$&$16$&$32$&$64$&$128$&$128$\\
\hline 
$G$&$\U{1}$&$\U{2}$&$\Sp{1}^3$&$\Sp{2}\Sp{1}$&$\SU{4}\Sp{1}$&$\Spin{7}\U{1}$&$\Spin{8}$&$\Spin{9}$&$\Spin{10}$&$\Spin{11}$&$\Spin{12}$&$\Spin{13}$\\
\hline 
\end{tabular}
}
\end{center}
\end{table}

\begin{remark} Although $\Spin{7}$ structures on 8-dimensional Riemannian manifolds cannot be described through a rank 7 vector bundle of symmetric endomorphisms of the tangent bundle (cf. \cite[Corollary 9]{pp}), this can definitely be done for a $\mathrm{Spin}_\Delta(7)$ structure in dimension 16. The above discussion shows in fact that the 7 symmetric endomorphisms $S_1,\dots,S_7$ allow to deal with a $\mathrm{Spin}_\Delta(7)$ structure as a Clifford system on  a 16-dimensional Riemannian manifold. Indeed, most of the known examples of $\Spin{9}$ manifolds carry the subordinated structure $\mathrm{Spin}_\Delta(7)$, cf. \cite{fr,oppv}. 

On the other hand, a $\Spin{7}$ structure on a 8-dimensional Riemannian manifold is an example of even Clifford structure, as defined in the Introduction. Here the defining vector bundle $E$ has rank 7 and one can choose $E \subset \End{TM}^-$ locally spanned as
\[
<\mathcal I, \mathcal J , \mathcal K , \mathcal E, \mathcal F , \mathcal G , \mathcal H>,
\]
i.e.~multiplying by elements in the canonical basis of octonions. As already mentioned, in situations like this, we call \emph{essential} the even Clifford structure.
\end{remark}

\section{The Clifford system $C_{9}$ and the essential Clifford structure on $\EIII$}\label{sectionEIII}

Our recipe for producing Clifford systems, according to Theorem \ref{Procedure}, gives on $\RR^{32}$ the following Clifford system $C_9=(T_0,T_1,\dots, T_9)$:

\begin{equation*}
\begin{split}
T_0=\left(
\begin{array}{r|r}
0 & \Id \\
\hline 
\Id & 0
\end{array}\right),\dots,
T_\alpha=\left(
\begin{array}{c|c}
0 & -S_{0\alpha} \\
\hline 
S_{0\alpha}& 0
\end{array}\right),\dots,
T_9=\left(
\begin{array}{r|r}
\Id & 0 \\
\hline
0 & -\Id
\end{array}\right).
\end{split}
\end{equation*}
Here $\alpha=1, \dots , 8$ and any block in the matrices is now of order $16$.

In \cite{pp3} we showed that the group of orthogonal transformations preserving the vector subspace $E^{10} =<C_9>  \subset \End{\RR^{32}}$ is the image of $\Spin{10}$ under a real representation in $\SO{32}$. Indeed, one can also look at the half-Spin representations of $\Spin{10}$ into $\SU{16} \subset \SO{32}$, that are related with the notion of \emph{even Clifford structure}, as defined in the Introduction. 

Note that, according to the definitions, any Clifford system $C_m$ on a Riemannian manifold $M^N$ gives rise to an even Clifford structure of rank $m+1$: this is for example the case of $C_4$ on 8-dimensional quaternion Hermitian manifolds, or of $C_8$ on 16-dimensional $\Spin{9}$ manifolds. Indeed, one has also a notion of \emph{parallel} even Clifford structure, requiring the existence of a metric connection $\nabla$ on $(E,h)$ such that $\varphi$ is connection preserving. Thus for example parallel even Clifford structures with $m=4,8$ correspond to a quaternion K\"ahler structure in dimension $8$ and to holonomy $\Spin{9}$ in dimension $16$. In \cite[page 955]{ms}, a classification is given of complete simply connected Riemannian manifolds with a parallel non-flat even Clifford structure. 

This classification statement includes one single example for each value of the rank $m+1=9,10,12,16$ (and no other examples when $m+1>8$). These examples are the ones in the last row (or column) of Table A, namely the projective planes over the four algebras $\OO$, $\CC \otimes \OO$, $\HH \otimes \OO$, $\OO \otimes \OO$. 

In the Cartan labelling they are the symmetric spaces:
\[
\begin{array}{rll}
\FII &= \mathrm{F_4}/ \Spin{9},\qquad &\EIII = \mathrm{E_6}/ \mathrm{Spin}(10) \cdot \mathrm{U}(1),\\
\EVI &= \mathrm{E_7}/ \mathrm{Spin}(12) \cdot \Sp{1},\qquad &\EVIII= \mathrm{E_8}/ \mathrm{Spin}(16)^+.
\end{array}
\]

In this respect we propose the following
\begin{definition} Let $M$ be a Riemannian manifold. An even Clifford structure $(E,h)$, with rank $m+1$ and defining map $\varphi: \; \Cl{0}(E) \rightarrow \End{TM}$, is said to be \emph{essential} if it is not a Clifford system, i.e.~if it is not locally spanned by anti-commuting self-dual involutions.
\end{definition}

We have already seen that $\Spin{7}$ structures in dimension 8 are examples of essential even Clifford structures, cf.~Remark \ref{Spin7}. As mentioned, both quaternion Hermitian structures in dimension $8$ and $\Spin{9}$ structures in dimension $16$ are instead non-essential. For example, on the Cayley plane $\FII$, local Clifford systems on its three coordinate open affine planes $\OO^2$ fit together to define the $\Spin{9}$ structure and hence the even Clifford structure. This property has no analogue for the other three projective planes $\EIII$, $\EVI$, $\EVIII$. As a matter of fact it has been proved in \cite{im} that the projective plane $\EIII$ over complex octonions cannot be covered by three coordinate open affine planes $\CC \otimes \OO^2$.
We have:

\begin{theorem}\label{EIII}
The parallel even Clifford structure on $\EIII$ is essential.
\end{theorem}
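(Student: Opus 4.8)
The plan is to show that the rank-$10$ even Clifford structure on $\EIII = \mathrm{E}_6/\mathrm{Spin}(10)\cdot\mathrm{U}(1)$ cannot be locally refined to a Clifford system $C_9$, by a combination of a representation-theoretic obstruction and a global (topological) obstruction. First I would recall from the previous section that an even Clifford structure is essential exactly when it is not locally spanned by anti-commuting self-dual involutions; equivalently, that the algebra bundle morphism $\varphi\colon \mathrm{Cl}^0(E)\to\End{TM}$ does not lift to a bundle of symmetric anti-commuting involutions $P_0,\dots,P_9$ spanning a rank-$10$ subbundle $E^{10}\subset\End{TM}$. By Proposition \ref{basic}, the case $m=9$ (so $m\equiv 1$ mod $8$) is precisely one of the two borderline cases where both possibilities are a priori open, so the purely algebraic count over a point does \emph{not} decide the matter; one must use extra structure of $\EIII$ itself.

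The key point is the comparison with the Cayley plane $\FII = \mathrm{F}_4/\Spin{9}$: there the even Clifford structure \emph{is} realized by a genuine Clifford system $C_8$, because $\FII$ is covered by its three coordinate affine charts $\OO^2\cong\RR^{16}$ on each of which the standard $\Spin{9}$ Clifford system of Section \ref{R16} is defined, and these patch up. The strategy for $\EIII$ is to show that the analogous patching is impossible. Here I would invoke the result of Iliev--Manivel (cited as \cite{im} in the excerpt) that $\EIII$, the projective plane over the complex octonions, \emph{cannot} be covered by three coordinate affine charts $(\CC\otimes\OO)^2\cong\CC^{16}$. Hence there is no global atlas of affine charts on which a local Clifford system $C_9$ is a priori given; one must instead argue that \emph{any} putative local Clifford system leads to a contradiction. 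The natural mechanism: a Clifford system $C_9$ on the tangent bundle would produce, via the construction of Section \ref{Statements}, an associated splitting/structure incompatible with the holonomy representation $\Spin{10}\cdot\U{1}\subset\mathrm{SO}(32)$ of $\EIII$; more precisely, the vector subbundle $E^{10}\subset\End{TM}$ together with the first involution $P_0$ would single out a sub-bundle of $\End{TM}$ (the $(+1)$-eigenbundle construction) that is not preserved by the isotropy representation, or would force $w_2$-type characteristic-class relations on $\EIII$ that fail. I would carry this out by: (i) recording that the isotropy representation of $\EIII$ at a point is the (complexified) half-spin representation $\Delta_{10}^+$ of $\Spin{10}$ twisted by $\U{1}$, of real dimension $32$; (ii) noting that a Clifford system $C_9$ would require lifting the even Clifford action $\mathrm{Cl}^0(E^{10})\cong\mathrm{Cl}_9$ to an action of the full $\mathrm{Cl}_{10}$ (or equivalently of $\RR^{0,10}\subset\mathrm{Cl}_{0,10}$ by symmetric endomorphisms) on $\RR^{32}$; and (iii) checking, using Table C and the periodicity $\Cl{m+8}\cong\Cl m\otimes\RR(16)$, that while such a representation exists \emph{pointwise} ($\Cl{10}\cong\RR(32)\oplus\RR(32)$ or similar), the presence of the $\U{1}$-factor in the holonomy — which acts nontrivially and is \emph{not} in the image of $\Spin{10}$ — obstructs the existence of a $\nabla$-parallel such lift globally, because the extra involutions would have to be $\U{1}$-equivariant, forcing the $\U{1}$ to act trivially on $E^{10}$, contradicting the non-triviality of the canonical $\U{1}$-bundle (equivalently, the non-vanishing of $c_1$ of the associated line bundle, or of the relevant Euler/Chern class of $\EIII$).

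The main obstacle I expect is step (iii): making precise \emph{why} the $\U{1}$-factor in the holonomy group of $\EIII$ is the genuine obstruction, as opposed to the analogous situation for $\FII$ where the holonomy is the simple group $\Spin{9}$ with no central $\U{1}$. Concretely, one must rule out that the local Clifford systems, even though they do not come from a global affine atlas, could nonetheless be patched by transition functions in $\SO{10}$ — and the heart of the argument is that such transition data would descend to a reduction of the $\Spin{10}\cdot\U{1}$-structure to a $\Spin{10}$-structure (or to $\mathrm{Spin}(9)$-type data on charts), which is topologically forbidden on $\EIII$. I would close the argument by citing the cohomological description of $\EIII$ from \cite{pp3} (and \cite{im}) to exhibit the offending class explicitly, thereby showing that no such global patching exists and hence that the parallel even Clifford structure on $\EIII$ is essential.
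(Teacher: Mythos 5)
Your final mechanism is the right one and is exactly the paper's: a Clifford system $C_9$ on $\EIII$ would reduce the structure group to $\Spin{10}\subset\SU{16}$ (Table D), and this is incompatible with the non-triviality of the canonical bundle of $\EIII$. But you have buried this under a lot of machinery that is not needed and you leave the decisive step as an acknowledged ``obstacle,'' when in fact it closes in one line. The Iliev--Manivel result on affine charts is pure motivation in the paper, not part of the proof; likewise the $(+1)$-eigenbundle construction, $w_2$-type relations, and the pointwise representation theory of $\Cl{10}$ play no role. Your worry about local Clifford systems being ``patched by transition functions in $\SO{10}$'' is moot: that patching is precisely the definition of a Clifford system $C_9$ on a manifold, so you may simply assume it exists and derive the contradiction. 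Namely, since the even Clifford structure is parallel, the induced reduction to $\Spin{10}\subset\SU{16}$ would contain the holonomy, forcing the canonical bundle of $\EIII$ to be trivial --- contradicting the positive Ricci curvature of a compact Hermitian symmetric space (equivalently, your non-vanishing of $c_1$). That is the whole proof; no further topological analysis of the $\U{1}$-factor or of the cohomology of $\EIII$ is required.
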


\begin{proof} Note first that the statement cannot follow from Proposition \ref{basic}. However, as observed in Table D, the structure group of a $32$-dimensional manifold carrying a Clifford system $C_9$ reduces to $\Spin{10} \subset \SU{16}$. This would be the case of the holonomy group, assuming that such a Clifford system induces the parallel even Clifford structure of $\EIII$. Thus, $\EIII$ would have a trivial canonical bundle, in contradiction with the  positive Ricci curvature property of Hermitian symmetric spaces of compact type.
\end{proof}

As showed in \cite{pp3}, the vector bundle defining the even Clifford structure is the $E^{10}$ locally spanned as $<\mathcal I> \oplus <S_0, \dots ,S_8>$. Here $\mathcal I$ is the global complex structure of the Hermitian symmetric space $\EIII$, and $S_0, \dots ,S_8$ (matrices in $\SO{16} \subset \SU{16}$) define, together with $\mathcal I$, the $\Spin{10} \cdot \mathrm{U}(1) \subset \U{16}$ structure given by its holonomy. The construction of the even Clifford structure follows of course the alternating composition, so that $\mathcal I \wedge S_\alpha = - S_\alpha \wedge \mathcal I$, allowing to get a skew symmetric matrix $\psi^D=(\psi_{\alpha\beta})$ of K\"ahler forms associated with all compositions of two generators (cf. Theorem \ref{acs:7->8->9}).

\begin{remark}\label{cohomology}
As proved in \cite{pp3}, the cohomology classes of the K\"ahler form and of the global differential form $\tau_4(\psi^D)$ generate the cohomology of $\EIII$.
\end{remark}

\section{Clifford systems $C_{10}$ up to $C_{16}$}\label{other 7}

For the next step, we need now the following order $32$ matrices:
\begin{equation*}
\begin{split}
T_{01}=\left(
\begin{array}{c|c|c|c}
\RO_i & 0 &0&0\\ \hline
0 & -\RO_i &0&0 \\ \hline
0&0&-\RO_i &0 \\ \hline
0&0&0&\RO_i 
\end{array}
\right),\dots,
T_{07}&=\left(
\begin{array}{c|c|c|c}
\RO_h & 0 &0&0\\ \hline
0 & -\RO_h &0&0 \\ \hline
0&0&-\RO_h &0 \\ \hline
0&0&0&\RO_h
\end{array}
\right),\\
T_{08}=\left(
\begin{array}{c|c|c|c}
0& -\Id &0&0\\ \hline
\Id &0&0&0 \\ \hline
0&0&0&\Id \\ \hline
0&0&-\Id&0
\end{array}
\right),\quad
T_{09}&=\left(
\begin{array}{c|c|c|c}
0 &0 &-\Id &0\\ \hline
0&0&0&-\Id \\ \hline
\Id&0&0&0\\ \hline
0&\Id&0&0 
\end{array}
\right),
\end{split}
\end{equation*}
and now we can write on $\RR^{64}$ the following matrices of the Clifford system $C_{10}$:
\begin{equation*}
U_0=\left(
\begin{array}{r|r}
0 & \Id \\
\hline 
\Id & 0
\end{array}\right),\dots,
U_\alpha=\left(
\begin{array}{c|c}
0 & -T_{0\alpha} \\
\hline 
T_{0\alpha}& 0
\end{array}\right),\dots,
U_{10}=\left(
\begin{array}{r|r}
\Id & 0 \\
\hline
0 & -\Id
\end{array}\right).
\end{equation*}
Here $\alpha=1, \dots , 9$ and any block in the matrices is of order $32$.

The subgroup of $\SO{64}$ preserving the subbundle $E^{11} =<C_{10}> \subset \End{\RR^{64}}$ is now $\Spin{11}$, a subgroup of $\Sp{16} \subset \SO{64}$ under its Spin representation.

Still another step, through
\begin{equation*}
U_{01}=\left(
\begin{array}{c|c}
T_{01}& 0 \\ \hline
0 & -T_{01}
\end{array}
\right),\dots,
U_{09}=\left(
\begin{array}{c|c}
T_{09} & 0\\ \hline
0 & -T_{09} 
\end{array}
\right),
U_{0,10}=\left(
\begin{array}{c|c}
0 &-\Id \\ \hline
\Id&0
\end{array}
\right), 
\end{equation*}
we go to the first Clifford system $C_{11}$ in $\RR^{128}$. Its matrices are:

\begin{equation*}
V'_0=\left(
\begin{array}{r|r}
0 & \Id \\
\hline 
\Id & 0
\end{array}\right),\dots,
V'_\alpha=\left(
\begin{array}{c|c}
0 & -U_{0\alpha} \\
\hline 
U_{0\alpha}& 0
\end{array}\right),\dots,
V'_{11}=\left(
\begin{array}{r|r}
\Id & 0 \\
\hline
0 & -\Id
\end{array}\right),
\end{equation*}
now with $\alpha=1, \dots , 10$ and any block of order $64$.

To recognize the next Clifford system, $C_{12}$ and again in $\RR^{128}$, introduce the following matrices, of order $32$:
\[
\resizebox{\textwidth}{!}{%
$
\text{Block}_{\RH_i} =\left(
\begin{array}{c|c|c|c}
0 & -\Id & 0 & 0 \\ \hline
\Id & 0 & 0 & 0 \\\hline
0 & 0 & 0 & \Id \\\hline
0 & 0 & -\Id & 0
\end{array}
\right),\enskip
\text{Block}_{\RH_j}  =\left(
\begin{array}{c|c|c|c}
0 & 0 & -\Id & 0 \\\hline
0 & 0 & 0 & -\Id \\\hline
\Id & 0 & 0 & 0 \\\hline
0 & \Id & 0 & 0
\end{array}
\right),\enskip
\text{Block}_{\LH_k} =\left(
\begin{array}{c|c|c|c}
0 & 0 & 0 & -\Id \\\hline
0 & 0 & -\Id & 0 \\\hline
0 & \Id & 0 & 0 \\\hline
\Id & 0 & 0 & 0
\end{array}
\right),
$
}
\]
block-wise extensions of matrices $\RH_i,\RH_j,\LH_k$ considered in Section \ref{first four}. We need also the further matrices, of order $64$, block-wise extension of $R_i,R_j,R_e,R_h$:
\begin{equation*}
\begin{split}
\text{Block}_{R_i} =\left(
\begin{array}{c|c}
\text{Block}_{\RH_i}  & 0 \\ \hline
0 & -\text{Block}_{\RH_i}  
\end{array}
\right)&,\qquad
\text{Block}_{R_j} =\left(
\begin{array}{c|c}
\text{Block}_{\RH_j}  & 0 \\ \hline
0 & -\text{Block}_{\RH_j}  
\end{array}
\right),\\
\text{Block}_{R_e} =\left(
\begin{array}{c|c}
0& -\Id \\ \hline
\Id &0
\end{array}
\right)&,\qquad
\text{Block}_{R_h} =\left(
\begin{array}{c|c}
0& \text{Block}_{\LH_k} \\ \hline
\text{Block}_{\LH_k}  & 0
\end{array}
\right).
\end{split}
\end{equation*}

Then one easily writes the last matrices in the Clifford system $C_{11}$ as:
\[
\resizebox{\textwidth}{!}{%
$
V'_8=\left(
\begin{array}{c|c}
0 & -\text{Block}_{R_i} \\ \hline
\text{Block}_{R_i}  & 0 
\end{array}
\right),\enskip 
V'_9=\left(
\begin{array}{c|c}
0& -\text{Block}_{R_j}  \\ \hline
\text{Block}_{R_j} & 0
\end{array}
\right),\enskip 
V'_{10}=\left(
\begin{array}{c|c}
0& -\text{Block}_{R_e} \\ \hline
\text{Block}_{R_e}  &0
\end{array}
\right).
$
}
\]

One gets:
\begin{proposition}
The matrices 
\[
V_0=V'_0, \dots , V_{10}=V'_{10}, V_{11}=\left(
\begin{array}{c|c}
0& -\mathrm{Block}_{R_h}  \\ \hline
\mathrm{Block}_{R_h} & 0
\end{array}
\right) , V_{12} =V'_{11}
\]
give rise to the Clifford system $C_{12}$ in $\RR^{128}$.
\end{proposition}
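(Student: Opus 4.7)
The plan is to exploit the fact that by Theorem \ref{Procedure}, the tuple $(V'_0,\ldots,V'_{11})=(V_0,\ldots,V_{10},V_{12})$ is already the first Clifford system $C_{11}$ on $\RR^{128}$. Consequently, every Clifford relation among the $V_\alpha$ with indices in $\{0,\ldots,10,12\}$ is automatic, and only three verifications about $V_{11}$ remain: (i) $V_{11}^T=V_{11}$; (ii) $V_{11}^2=\Id$; (iii) $V_{11}$ anti-commutes with each of $V_0,V_1,\ldots,V_{10},V_{12}$.

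Properties (i) and (ii) follow because $R_h$ is right multiplication by the unit imaginary octonion $h$, hence skew-symmetric in the standard basis and satisfies $R_h^2=R_{h\cdot h}=-\Id$; both features transfer directly to $\mathrm{Block}_{R_h}$ from its definition, and then the standard off-diagonal $\left(\begin{smallmatrix} 0 & -A\\ A & 0\end{smallmatrix}\right)$ pattern with $A$ skew and $A^2=-\Id$ yields $V_{11}=V_{11}^T$ and $V_{11}^2=\Id$. Anti-commutation of $V_{11}$ with $V_0=\left(\begin{smallmatrix}0 & \Id\\ \Id & 0\end{smallmatrix}\right)$ and $V_{12}=V'_{11}=\left(\begin{smallmatrix}\Id & 0\\ 0 & -\Id\end{smallmatrix}\right)$ is then purely formal from the block shape of $V_{11}$, exactly as for the $V'_\alpha$ with $\alpha=1,\ldots,10$. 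The content of (iii) therefore reduces to the single statement
\[
\mathrm{Block}_{R_h}\, U_{0\alpha} + U_{0\alpha}\, \mathrm{Block}_{R_h}=0,\qquad \alpha=1,\ldots,10.
\]

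To settle this last point, I would unfold the inductive definitions. The matrices $U_{0\alpha}$ for $\alpha=1,\ldots,9$ are block-diagonal extensions of the $T_{0\alpha}$, while $U_{0,10}$ is a purely off-diagonal identity-type block; the $T_{0\alpha}$ are themselves $4\times 4$ block matrices built from $R^{\OO}_i,\ldots,R^{\OO}_h$ with fixed sign patterns, together with two extra off-diagonal shapes for $T_{08},T_{09}$. Now $\mathrm{Block}_{R_h}$ is the $2\times 2$ off-diagonal matrix with entry $\mathrm{Block}_{L^{\HH}_k}$, itself a block extension of $L^{\HH}_k$. At the bottom level, the crucial anti-commutations hold on the octonions: $R_h$ anti-commutes with each of $R_i,R_j,R_k,R_e,R_f,R_g$, and the sign-choices in each block extension were arranged precisely so that anti-commutation is preserved going up one level. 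For $\alpha=1,\ldots,7$ the computation therefore reduces, after $2\times 2$ and then $4\times 4$ block multiplication, to these scalar anti-commutations. For $\alpha=8,9,10$ one checks separately the three off-diagonal cases, where the verification uses instead anti-commutation of $\mathrm{Block}_{L^{\HH}_k}$ with the relevant identity-type off-diagonal patterns, a matter of a few $2\times 2$ block identities.

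The main obstacle is keeping the sign bookkeeping straight in (iii) for $\alpha\in\{1,\ldots,7\}$: written out entrywise, two order-$64$ matrix products contain many terms, but organising them into nested $2\times 2$ and $4\times 4$ block products collapses the task to a short list of elementary identities on $R^{\OO}_i,\ldots,R^{\OO}_h$ and $L^{\HH}_k$. This is structurally the same verification that produced $C_8$ from $C_5$ in Section \ref{R16}, only one level higher in the doubling hierarchy, and the general pattern spelled out in the proof of Theorem \ref{Procedure} covers it uniformly.
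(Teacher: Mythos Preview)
Your proposal is correct and follows essentially the same approach as the paper: the authors note that the only point to check is that $V_{11}$ anti-commutes with all the other matrices, declaring this a straightforward computation. You spell out more of that computation (and also make explicit the symmetry and involutivity of $V_{11}$, which the paper leaves implicit), but the strategy is identical.
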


\begin{proof} 
The only point to check is that $V_{11}$ anti-commutes with all the other matrices. This is a straightforward computation.
\end{proof}

The orthogonal  transformations preserving $C_{12}$ correspond to a real representation of $\Spin{12}$ in $\SO{128}$.

As a further step, we construct $C_{13}$, the first Clifford system in $\RR^{256}$, whose involutions are:
\begin{equation*}
W'_0=
\left(
\begin{array}{r|r}
0 & \Id \\
\hline 
\Id & 0
\end{array}\right),\dots,
W'_\alpha=\left(
\begin{array}{c|c}
0 & -V_{0\alpha} \\
\hline 
V_{0\alpha}& 0
\end{array}\right),\dots,
W'_{13}=\left(
\begin{array}{r|r}
\Id & 0 \\
\hline
0 & -\Id
\end{array}\right),
\end{equation*}
now with $\alpha =1, \dots , 12$. In particular
\[
W'_{12} =\left(
\begin{array}{c|c|c|c}
0 & 0 & 0 & \Id \\\hline
0 & 0 & -\Id & 0 \\\hline
0 & -\Id & 0 & 0 \\\hline
\Id & 0 & 0 & 0
\end{array}
\right)=\left(
\begin{array}{c|c}
0 &-\mathrm{Block}^{128}_{R_e} \\
\hline 
\mathrm{Block}^{128}_{R_e}& 0
\end{array}\right), 
\]
when now the block matrix is or order $128$. Then one recognizes that one can add three similar block matrices with $\mathrm{Block}^{128}_{R_i},\mathrm{Block}^{128}_{R_j},\mathrm{Block}^{128}_{L_h}$, extending $C_{13}$ up to $C_{16}$, still on $\RR^{256}$, and with intermediate Clifford systems $C_{14}$ and $C_{15}$.

\section{The symmetric spaces $\EVI$ and $\EVIII$}\label{essential Clifford structures}

As a consequence of Proposition \ref{basic} we have:

\begin{theorem}\label{EVI}
The parallel even Clifford structures on $\EVI$ and on $\EVIII$ are essential.
\end{theorem}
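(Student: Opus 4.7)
The plan is to derive both statements as immediate consequences of Proposition \ref{basic}, since the ranks and dimensions for $\EVI$ and $\EVIII$ fall into the range where essentiality is forced on purely algebraic grounds (in contrast to the situation of Theorem \ref{EIII}, where the analogous $m$ lay in the ambiguous range $m \equiv 1 \,\text{mod}\, 8$ and a holonomy/canonical bundle argument was needed).

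First I would read off the numerics from the last row of Table A together with the rank/dimension correspondence explained in Section \ref{Statements}. The symmetric space $\EVI = \mathrm{E_7}/\Spin{12}\cdot\Sp{1}$ has dimension $128 = 2\delta(11)$ and carries a parallel even Clifford structure of rank $m+1 = 12$, so the relevant value is $m = 11$. Likewise, $\EVIII = \mathrm{E_8}/\Spin{16}^+$ has dimension $256 = 2\delta(15)$ and carries a parallel even Clifford structure of rank $m+1 = 16$, corresponding to $m = 15$.

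Next I would observe that $11 \equiv 3 \pmod 8$ and $15 \equiv 7 \pmod 8$, so both cases fall in the range $m \equiv 3,5,6,7 \pmod 8$ of Proposition \ref{basic}. That proposition tells us that at every point of $\EVI$ (resp.\ $\EVIII$) the representation of $\Cl{m+1}^0$ on the tangent space defining the even Clifford structure cannot come from a Clifford system $C_m$, since no such $C_m$ exists already in the model $\RR^{2\delta(m)}$. Consequently, the even Clifford structure cannot be locally generated by $m+1$ anti-commuting self-dual involutions, which is precisely the definition of being essential.

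The only conceptual point worth stressing is that essentiality is a pointwise linear-algebraic obstruction: if the Clifford module structure on $T_xM$ cannot be reorganized as a $C_m$ even at one point, then no local Clifford system can exist, so the global even Clifford structure is automatically essential. There is no substantive obstacle beyond invoking Proposition \ref{basic}; the proof is short and reduces to the table of Clifford algebras already displayed in Table \ref{m-1}, extended via the $8$-periodicity $\Cl{m+8} \cong \Cl{m} \otimes \RR(16)$.
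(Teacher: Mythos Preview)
Your approach is correct and is exactly the paper's own (one-line) proof: invoke Proposition \ref{basic} via the congruences $11\equiv 3$ and $15\equiv 7\pmod 8$. However, the dimensions you quote are wrong. The quaternion K\"ahler Wolf space $\EVI=\mathrm{E_7}/\Spin{12}\cdot\Sp{1}$ has real dimension $133-69=64$, not $128$; and $\EVIII=\mathrm{E_8}/\Spin{16}^+$ has real dimension $248-120=128$, not $256$. In each case the tangent representation is the half-spin module, of dimension $\delta(m)$ rather than $2\delta(m)$.

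This actually makes the essentiality more transparent, not less: an irreducible Clifford system $C_{11}$ needs $N=2\delta(11)=128$, so no $C_{11}$ can live on the $64$-dimensional tangent space of $\EVI$ at all; likewise no $C_{15}$ fits on $\RR^{128}$. The congruence classes $m\equiv 3,5,6,7\pmod 8$ singled out in Proposition \ref{basic} are precisely those for which the irreducible module of $\Cl{m+1}^0\cong\Cl{m}$ has dimension $\delta(m)<2\delta(m)$, so the obstruction is purely dimensional. Your invocation of Proposition \ref{basic} is therefore the right move; just repair the numerics.
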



These even Clifford structures can in fact be defined by vector subbundles $E^{12}$ and $E^{16}$ of the endomorphism bundle, locally generated as follows:
\begin{equation}\label{E^12}
E^{12}:<\mathcal I, \mathcal J , \mathcal K > \oplus <S_0, \dots , S_9> \; \longrightarrow \; \EVI,
\end{equation}
\begin{equation}\label{E^16}
E^{16}:<\mathcal I, \mathcal J , \mathcal K , \mathcal E, \mathcal F , \mathcal G , \mathcal H> \oplus <S_0, \dots , S_9> \; \longrightarrow \; \EVIII,
\end{equation}
where $S_0, \dots , S_9$ are the involutions in $\SO{16}$ defining the $\Spin{9}$ structures.

Note that the quaternionic structure of $\EVI$, one of the quaternion K\"ahler Wolf spaces, appears as part of its even Clifford structure. 

As already mentioned concerning $\EIII$, also on $\EVI$ and $\EVIII$ the compositions of generators of the even Clifford structure follows the alternating property e.g. $\mathcal I \wedge S_\alpha = - S_\alpha \wedge \mathcal I$. In this way one still has skew-symmetric matrices of K\"ahler forms associated with compositions of two generators (cf. Theorem \ref{acs:7->8->9} and Theorem \ref{EIII}). We denote these skew-symmetric matrices by $\psi^E$ for $\EVI$ and $\psi^F$ for $\EVIII$. One can look at the following sequence of the matrices we introduced: 
\[
\psi^A \subset \psi^B \subset \psi^C \subset \psi^D \subset \psi^E \subset \psi^F,
\]
all producing, via invariant polynomials, global differential forms associated with the associated structure groups
\[
\Spin{7}_\Delta , \; \Spin{8} , \; \Spin{9} , \; \Spin{10}\cdot \U{1} , \; \Spin{12} \cdot \Sp{1}, \; \Spin{16}^+.
\]
(cf.\ also Remark \ref{cohomology}). We can mention here that the (rational) cohomology of $\EVI$ is generated, besides by the class of the quaternion K\"ahler $4$-form, by a $8$-dimensional class and by a $12$-dimensional class. It is thus tempting to represent these classes by $\tau_4(\psi^E)$ and by $\tau_6(\psi^E)$. As for $\EVIII$, it is known that its rational cohomology is generated by classes of dimension $8,12,16,20$. One can also observe, in this last situation of $\EVIII$, that the local K\"ahler forms $\psi_{\alpha\beta}$ associated with the group $\Spin{16}^+$ can be seen for $\alpha < \beta$ in correspondence with a basis of its Lie algebra $\lieso{16}$. As such, they exhaust both families of $36+84=120$ exterior 2-forms appearing in decomposition \ref{decomposition}.

We conclude with two remarks relating the discussed subjects with some of our previous work.

\begin{remark}\label{spheres}
In \cite{pp2} we described a procedure to construct maximal orthonormal systems of tangent vector fields on spheres. For that we essentially used, besides multiplication in $\CC, \HH, \OO$, the $\Spin{9}$ structure of $\RR^{16}$, applied also block-wise in higher dimension. Remind that the maximal number $\sigma(N)$ of linear independent vector field on an odd-dimensional sphere $S^{N-1}$, with $N=(2k+1)2^p16^q$ and $0 \leq p \leq 3$, is given by the Hurwitz-Radon formula
\[
\sigma(N)=2^p+8q-1.
\]
Thus, it does not appear easy to read this number out of Table B, even considering also reducible Clifford systems.

On the other hand, one can recognize from the construction of \cite{pp2} that there is instead a simple relation with even Clifford structures, and that for example the construction of a maximal system of tangent vector fields on spheres $S^{31}, S^{63}, S^{127}$ can be rephrased using the essential even Clifford structures of rank $10,12,16$ on $\RR^{32}, \RR^{64}, \RR^{128}$. Such even Clifford structures exist and are parallel non-flat on the symmetric spaces $\EIII$, $\EVI$, $\EVIII$ (cf.\ proof of Theorem \ref{EIII}, and equations \eqref{E^12}, \eqref{E^16}). Following \cite{pp2}, this point of view can be suitably applied to spheres of any odd dimension.
\end{remark}

\begin{remark}\label{lcp}
In \cite{oppv} we studied the structure of compact locally conformally parallel $\Spin{9}$ manifolds. They are of course examples, together with their K\"ahler, quaternion K\"ahler, and $\Spin{7}$ counterparts, of manifolds equipped with a \emph{locally conformally parallel even Clifford structure}. We can here observe that the following Hopf manifolds
\[
S^{31} \times S^1, \; S^{63}\times S^1, \; S^{127} \times S^1
\]
are further examples of them, with the locally conformally flat metric coming from their universal covering. One can also describe some finite subgroups of $\Spin{10}$, $\Spin{12}$, $\Spin{16}^+$ acting freely on $S^{31}, S^{63}, S^{127}$, respectively, and the list of groups $K$ mentioned in Example 6.6 of \cite{oppv} certainly applies to these three cases. Accordingly, finite quotients like $(S^{N-1}/K) \times S^1$, with $N=32,64,128$, still carry a locally conformally parallel even Clifford structure. Note however that the structure Theorem C proved in \cite{oppv} cannot be reproduced for these higher rank locally conformally parallel even Clifford structures.
\end{remark}

\footnotesize

\bigskip

\begin{flushleft}


%

Maurizio Parton\\
Dipartimento di Economia, Universit\`a di Chieti-Pescara\\
Viale della Pineta 4, I-65129 Pescara, Italy\\
E-mail address: \texttt{parton@unich.it}\\[2ex]
Paolo Piccinni\\
Dipartimento di Matematica, Sapienza-Universit\`a di Roma\\
Piazzale Aldo Moro 2, I-00185, Roma, Italy\\
E-mail address: \texttt{piccinni@mat.uniroma1.it}\\[2ex]
Victor Vuletescu\\
Faculty of Mathematics and Informatics, University of Bucharest\\ 
14 Academiei str., 70109, Bucharest, Romania\\
E-mail address: \texttt{vuli@fmi.unibuc.ro}\\[2ex]

\end{flushleft}

\end{document}